\def\zz{{\bf Z}}
\def\ff{{\bf F}} 
\def\qq{{\bf Q}}
\def\cc{{\bf C}}
\def\co{\colon\thinspace}
\def\co{\colon}
\def\tD{\tilde{\Delta}}
\newcommand\Aut{\operatorname{Aut}}
\newcommand\Hom{\operatorname{Hom}}
\newcommand{\fig}[2] { \includegraphics[scale=#1]{#2} }
\newcommand{\Ind}{\operatorname{Ind}}
\newcommand{\lk}{\ell k}
\newtheorem{theorem}{Theorem}[section]
\newtheorem{lemma}[theorem]{Lemma}
\newtheorem{corollary}[theorem]{Corollary}
\newtheorem{prop}[theorem]{Proposition}
\theoremstyle{definition}
\newtheorem{definition}[theorem]{Definition}
\def\co{\colon\thinspace}
\def\var{[t^{\pm1}]}
\numberwithin{equation}{section}
\begin{document}

\title[Twisted polynomials and metabelian representations]{Metabelian representations, twisted Alexander polynomials, knot slicing, and mutation}

\author{Chris Herald}
\author{Paul Kirk} 
\author{Charles Livingston}
\thanks{This work was supported in part by the National Science Foundation under Grants 0709625, 0604310 and 0406934}
\address{Chris Herald: Department of Mathematics, University of Nevada, Reno, NV  89557}
\address{Paul Kirk, Charles Livingston: Department of Mathematics, Indiana University, Bloomington, IN 47405 \vskip.2in}
\email{herald@unr.edu}
\email{pkirk@indiana.edu}
\email{livingst@indiana.edu}
\keywords{Twisted Alexander polynomial, slice knot, mutation, knot concordance}


\begin{abstract} 
Given a knot complement $X$ and its $p$--fold cyclic cover $X_p\to X$,  we identify twisted polynomials associated to   $GL_1(\ff\var)$ representations of $\pi_1(X_p)$ with twisted polynomials associated to related $GL_p(\ff\var)$ representations of $\pi_1(X)$ which factor through  metabelian representations.

This provides  a simpler  and faster algorithm to compute these polynomials, allowing us to prove that
16 (of 18 previously unknown) algebraically slice knots of 12 or fewer crossings are not slice. We also use this improved algorithm to prove that the 24 mutants of the pretzel knot $P(3,7,9,11,15)$, corresponding to permutations of $(7,9,11,15)$, represent distinct concordance classes.
\end{abstract}

\maketitle
 
\section{Introduction}

In 1975 Casson and Gordon~\cite{cg1}   presented the first examples of algebraically slice knots that are not slice.  Since then, many other powerful obstructions to a knot being slice have been developed, both in the topological locally flat category, the focus of this paper, and in the smooth category.   See~\cite{liv3} for a list of references up 2003.  A few more recent articles include~\cite{chl, cot, fri, liv4, os, ras}.

Despite this remarkable progress   since Levine defined the algebraic concordance group 40 years ago, the challenge of proving that a given algebraically slice knot is not slice has largely remained   intractable.  As evidence, among prime knots of 12 or fewer crossings, there are 18 that are algebraically slice but not readily shown to be slice.  Of these, two fall to the results  of Casson-Gordon concerning 2-bridge knots, but the remaining 16 have been inaccessible until now.  The most recent advances in smooth concordance place 12 crossing knots on the edge of what is computable; in the topological  category the problem is much more difficult.

Here we explore obstructions based on twisted Alexander polynomials and develop readily computable invariants that are highly effective in obstructing sliceness.  In particular, of the 18 knots just mentioned, quick computations demonstrate that 16 are not slice.  Unexpectedly, one of the remaining two knots is shown to be smoothly slice, and only one questionable case remains in the table.

 Our initial work~\cite{kl1, kl2} with twisted knot polynomials began to address the challenge of finding computable slicing obstructions, but that work was not sufficient to effectively deal with any of the outstanding cases taken from the table of 12 crossing knots.  
 
\vskip.1in

\noindent{\bf Twisted Polynomials.} 
Given a space $X$ and a homomorphism    $\rho \co \pi_1(X) \to GL_n(\ff\var)$, where $\ff$ is a field, there is defined a {\it twisted Alexander polynomial}, $\Delta_{X,  \rho}(t) \in \ff\var$.  The early development of this invariant as a tool in classical knot theory, in which case $X$ was taken to be a classical knot complement, appeared in such papers as~\cite{jiang, kitano,  lin, wada}.  The theory and application of twisted knot polynomials has been considered by many authors; a few papers include~\cite{cha, friedl,  goda, hillman, kitano2, silver}.

In~\cite{kl1, kl2, kl3} we considered the case in which $X$ is a cyclic cover of a classical knot complement and $\rho$ is a 1--dimensional complex representation.  One of the main results of~\cite{kl1} was that for appropriately defined $\rho$, $\Delta_{X, \rho}(t)$ can be interpreted as the discriminant of a Casson-Gordon invariant of the knot.  (Discriminants of Casson-Gordon invariants were first studied in~\cite{gl2, lit1}.)  In~\cite{kl2} this was applied to analyze knot concordance problems, for instance distinguishing knots from their reverses in concordance  and distinguishing positive mutants of certain pretzel knots;  discriminants were later used in~\cite{kl3} to further analyze the action of mutation on the concordance group.   
\vskip.1in

\noindent{\bf Results.} 
Our main theoretical result, Theorem \ref {bigrep}, identifies the twisted polynomial developed in~\cite{kl1}, based on a 1--dimensional representation of a cyclic cover of a knot, with  a twisted polynomial associated to a higher-dimensional metabelian representation of the knot group itself.  As a practical matter, this vastly simplifies the computation of twisted polynomials; in brief, the added complexity of working with covers results from the fact that  if a knot group has $g$ generators, then the group of its $n$-fold cyclic branched cover has roughly $ng$ generators. 

The second focus of our theoretical investigations is a detailed analysis of the $\ff_q[\zz_p]$--module structure of $H_1(B_p;\zz_q)$ where $B_p$ denotes the $p$-fold branched cover of a knot in $S^3$. This allows us to identify the space of those characters in $\Hom(H_1(B_p),\zz_q)$ which vanish on equivariant metabolizers for the linking form.   Such characters determine which twisted Alexander polynomials to use to obstruct sliceness. 

As mentioned earlier, our main application is to settle the slice status of all but one of the 18 remaining  algebraically slice knots with 12 or fewer crossings that were not known to be topologically slice.  We show 16 of these are not slice.   A  side note is a construction that proves that one of the 18 is slice.  Though we do not pursue it further in this article, the trick we introduce  should be quite useful in the further enumeration of slice knots.

 There is an interesting parallel between our work and that concerning reversibility of knots.  Fox~\cite{fox} asked in 1961 if nonreversible knots existed, and he pointed to $8_{17}$ as the first case of interest.  (Fox used the word {\it invertible} rather than {\it reversible}.)   Trotter~\cite{trot}
soon showed the existence of nonreversible pretzel knots, but it took almost twenty years before several authors~\cite{hart, kaw} could show that $8_{17}$ is not reversible.  What we find especially satisfying is that Hartley's approach, the first that was capable of addressing general knots in the table, depended on metabelian representations, the same tool that is central here.  To complete the circle, as a  second application we give an example of a 5--stranded pretzel knot for which all 24 of its positive mutants are distinct in concordance. This set consists of twelve knots and their reverses.

\vskip.1in

The authors wish to thank Darrell Haile, Michael Larsen, Swatee Naik,  and Jim Davis for helpful discussions.

\section{Twisted Homology and Polynomials }\label{section2}

Let    $X$ be a finite $CW$--complex with universal cover $\tilde{X}$ and set $\pi = \pi_1(X)$. Our convention is that $\pi$ acts on the left on the cellular chain complex $C_*(\tilde{X})$.  Let $M$ be a right $\zz[\pi]$--module.  

The   twisted chain complex     $C_*(X; M)$ is defined to be $ M \otimes_{\zz[\pi]} C_*(\tilde{X})$.  The twisted homology  of $X$ is given as the homology of this complex:  $H_n(X; M) = H_n( M \otimes_{\zz[\pi]} C_*(\tilde{X}))$.  If $M$ has the compatible structure of a left $S$--module for a ring $S$, so that $M$ is a $(S,\zz[\pi])$-bimodule, then $H_n(X; M)$ inherits a left $S$--module structure.

The order of a cyclic module over a principal ideal domain is the generator of the annihilator ideal.  The order of a direct sum of cyclic modules is the product of the orders of the summands.

\begin{definition}\label{deftwist}  Let $\ff$ denote some field.  Suppose that $M$ is a $(\ff\var, \zz[\pi])$--bimodule. 
Define the {\em twisted Alexander polynomial} associated to $X$ and $M$, $\Delta_{X, M}\in\ff\var$, to be the order of  $H_1(X;M)$ as a left $\ff\var$--module.  This is well-defined up to multiples by units in $\ff\var$, that is, elements of the form $at^k$, $ a\in\ff^*, k \in \zz$.
 If the right $\zz[\pi]$--module structure on $M$ is determined by a homomorphism $\alpha \co \pi\to \Aut(M)$, we sometimes write 
$\Delta_{X,\alpha}$ instead of $\Delta_{X,M}$. 
\end{definition}

In our earlier article \cite{kl1} and all other articles on twisted Alexander polynomials,   the homomorphism $\alpha$ was taken to have the form  $\epsilon\otimes \rho$ for some $\epsilon\co \pi\to \zz$.  More precisely, we took   $V$ an $\ff$--vector space with a  right  $\zz[\pi]$--action determined by a homomorphism $\rho \co \pi \to GL(V)$,  and   constructed the $(\ff\var, \zz[\pi])$--bimodule 
$M=\ff\var\otimes_\ff V$ with the right $\zz[\pi]$--action given by $\alpha=\epsilon\otimes \rho$. In other words, $$(f(t) \otimes v) \cdot \gamma = t^{\epsilon(\gamma)}f(t) \otimes v \rho(\gamma), \text{ for $\gamma$ in $\pi$}. $$ The extra flexibility afforded in Definition \ref{deftwist} by allowing  $\alpha$ to be more general than tensor products of the form $\epsilon\otimes \rho$ permits a streamlining of some of our arguments. In particular, the twisted polynomials denoted $\Delta_{X,\epsilon,\rho}$ in \cite{kl1} are denoted here by $\Delta_{X,\epsilon\otimes \rho}$ (or   $\Delta_{X,\ff\var\otimes_F V}$ if the action is understood).

\section{Shapiro's Lemma}

  Let $ X_p\to X$ be a   degree $p$ connected covering space of $X$, and set $\pi_p=\pi_1(X_p)$.
 Presentations of the group $\pi_p$ become complicated very quickly as $p$ increases, making it difficult to carry out explicit computations of twisted polynomials using  covers.  One  goal  of this article is to identify the twisted polynomial associated with $X_p$ with one associated with $X$.  The basic result of homological algebra needed for doing this is Shapiro's Lemma.  A   discussion  can be found in~\cite{brown}.  
 
 Given a subgroup $H\subset G$ and a right $\zz[H]$--module $M$, the right $\zz[G]$--module  $M\otimes_{\zz[H]}\zz[G]$ is denoted $\Ind_H^G(M)$.  
 
  \smallskip
  
 \noindent{\bf Shapiro's Lemma.}  {\em Let $X_p $ be a connected covering space of $X$, $\pi=\pi_1(X) $ and $\pi_p=\pi_1(X_p)$. Given a   right $\zz[\pi_p]$--module $M$, then $H_i(X_p; M) \cong H_i(X; \Ind_{\pi_p}^{\pi}(M))$.   If $S$ is a ring and $M$ is an  $(S,\zz[\pi_p])$--bimodule, then $\Ind_{\pi_p}^{\pi} (M)$ is an $(S, \zz[\pi])$--bimodule and the homology group isomorphism also preserves the  left $S$--module structure.}
 
   \smallskip
 
 \begin{proof} 
 Let $\tilde X$ denote the universal cover of $X$ and $X_p$. The covering transformations provide the cellular chain complex $C_*=C_*(\tilde X)$  with the structure a  right $\zz[\pi]$--complex.  We have the following isomorphisms of left $S$--chain complexes.
 $$(M \otimes_{\zz[\pi_p]} \zz[\pi]) \otimes_{\zz[\pi]}  C_*  \cong  M \otimes_{\zz[\pi_p]} (\zz[\pi]  \otimes_{\zz[\pi]}  C_* ) \cong M \otimes_{\zz[\pi_p]} C_* .$$
These induce isomorphisms on homology. \end{proof}

The module $\Ind_H^G(M)$ can also be described as follows (see~\cite{brown} for details). First,  
$\zz[G]$ is a free left $\zz[H]$--module on $H\backslash G$.  Let $R=\{g_i\}\subset G$ be a complete set of coset representatives; these form a basis for $\zz[G]$ as a left $\zz[H]$--module, and hence as an abelian group, 
\begin{equation}\label{split} \Ind_H^G(M)=M\otimes_{\zz[H]}\zz[G]\cong \bigoplus _{g_i\in R}M\otimes g_i, \end{equation} as an internal direct sum.
  The right $\zz[G]$--action on the direct sum is described in this basis as follows: if $g_i\in R$ and $g\in G$, then $Hg_ig=Hg_j$ for some $g_j\in R$; in other words,  $g_i g g_j^{-1} \in H$. 
Then for $m\in M$,  $(m\otimes g_i)\cdot g=mg_i g g_j^{-1}\otimes g_j$.

One useful consequence  is that if $S$ is a commutative ring with unity with a right $H$--action (and hence an $(S,\zz[H])$--bimodule), then $\Ind_H^G(S)$  is a free left $S$--module with basis $\{1\otimes g_i\}$.

Another consequence is a naturality property of induced modules.  To describe it, note that given a group homomorphism $h \co A \to B$ and a right $\zz[B]$--module $M$, there is a pulled back  $\zz[A]$--module structure on $M$, which we denote  $M^h$, given by $m   \cdot   a=m \cdot h(a)$. 
In all of our examples, we have subgroups $H\subset G$ and $\pi_p \subset \pi$, and a  commutative diagram (with inclusions for the horizontal maps)

$$ \begin{diagram}\dgARROWLENGTH=2em
\node{\pi_p}\arrow{e,t}{ } \arrow{s,l}{\phi'}\node{\pi}\arrow{s,l}{\phi}\\
\node{H}\arrow{e,t}{ } \node{G}
\end{diagram}
 $$
   In this setting, if  $S$   is a commutative ring  with unity and $M$ is an $(S,\zz[H])$--bimodule, the naturality property of induced modules is expressed as follows.  

  \begin{prop} \label{square}If $\phi$ is surjective  
  and $\pi_p = \phi^{-1}(H)$, then $\big(\Ind_{H}^{G} (M)\big)^\phi \cong \Ind_{\pi_p}^{\pi} (M^{\phi'})$ as $(S,\zz[\pi])$--bimodules.
   \end{prop}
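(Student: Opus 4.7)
The plan is to write down an explicit map between the two $(S,\zz[\pi])$-bimodules and verify it is an isomorphism, using a compatible choice of coset representatives.

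First, I would observe that since $\phi\co \pi\to G$ is surjective and $\ker\phi\subset \phi^{-1}(H)=\pi_p$, the homomorphism $\phi$ descends to a bijection between the coset spaces $\pi_p\backslash \pi$ and $H\backslash G$. Concretely, if $R=\{g_i\}\subset G$ is a complete set of coset representatives for $H\backslash G$, I would choose lifts $\tilde g_i\in \pi$ with $\phi(\tilde g_i)=g_i$. A short check shows $\{\tilde g_i\}$ is a complete set of coset representatives for $\pi_p\backslash \pi$: two lifts represent the same $\pi_p$-coset iff their images lie in the same $H$-coset (using $\pi_p=\phi^{-1}(H)$), and every $\gamma\in\pi$ is $\pi_p$-equivalent to some $\tilde g_i$ because $\phi(\gamma)\in Hg_i$ can be corrected by an element of $\pi_p$ mapping to the appropriate $h\in H$ (using that $\phi|_{\pi_p}=\phi'$ is surjective onto $H$).

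Next, I would define the candidate map
$$\Psi\co \Ind_{\pi_p}^{\pi}(M^{\phi'})\longrightarrow \big(\Ind_{H}^{G}(M)\big)^{\phi}, \qquad \Psi(m\otimes \gamma)=m\otimes \phi(\gamma),$$
on elementary tensors and extend additively. To see $\Psi$ is well-defined over the balanced tensor product, I would verify that for $x\in \pi_p$,
$$\Psi(m\cdot x\otimes \gamma)=(m\cdot \phi'(x))\otimes \phi(\gamma)=(m\cdot \phi(x))\otimes \phi(\gamma)=m\otimes \phi(x)\phi(\gamma)=\Psi(m\otimes x\gamma),$$
where moving $\phi(x)\in H$ across the tensor uses the $\zz[H]$-balancing in $\Ind_H^G(M)$. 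Left $S$-linearity is immediate from the definition, and right $\zz[\pi]$-equivariance follows from $\Psi(m\otimes \gamma\cdot \gamma')=m\otimes \phi(\gamma)\phi(\gamma')=\Psi(m\otimes\gamma)\cdot \gamma'$, noting that the right $\pi$-action on the target is by definition the pullback along $\phi$ of the right $G$-action.

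Finally, I would check bijectivity by passing to the basis of coset representatives. Using \eqref{split} on both sides gives direct sum decompositions
$$\Ind_{\pi_p}^{\pi}(M^{\phi'})=\bigoplus_i M\otimes \tilde g_i \qquad \text{and}\qquad \Ind_{H}^{G}(M)=\bigoplus_i M\otimes g_i,$$
and $\Psi$ carries the $i$-th summand isomorphically to the $i$-th summand via $m\otimes \tilde g_i\mapsto m\otimes g_i$, which is clearly a bijection of abelian groups. Combined with the structural checks above, this shows $\Psi$ is the desired isomorphism of $(S,\zz[\pi])$-bimodules. The only real subtlety, and the step I would write out most carefully, is the well-definedness calculation, because it is the place where the hypotheses $\phi\text{ surjective}$ and $\pi_p=\phi^{-1}(H)$ enter through the compatibility $\phi'=\phi|_{\pi_p}$ with values in $H$; everything else is bookkeeping.
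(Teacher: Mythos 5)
Your proof is correct and follows essentially the same route as the paper's: both use the bijection $\pi_p\backslash\pi \leftrightarrow H\backslash G$ coming from $\phi$ surjective with $\pi_p=\phi^{-1}(H)$, the direct-sum decomposition of Equation (\ref{split}) over lifted coset representatives, and the explicit identification $m\otimes\gamma\mapsto m\otimes\phi(\gamma)$. You simply spell out the well-definedness and bimodule-equivariance checks that the paper leaves implicit.
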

  \begin{proof}  The hypotheses imply that $\pi_p\backslash \pi$ and $ H\backslash G$ are in bijective correspondence. If $\gamma_i\in \pi$ satisfy $\phi(\gamma_i)=g_i$, then the discussion above  implies that 
  $$\Ind_H^G (M)=\bigoplus _{[g_i]\in H\backslash G}M\otimes g_i=
  \bigoplus _{[\gamma_i]\in \pi_p\backslash \pi}M\otimes \gamma_i=\Ind_{\pi_p}^\pi(M).$$
  The fact that the kernels of $\phi$ and $\phi'$ coincide implies that the identification is well defined; in particular it is independent of the choice of $g_i$ and their lifts $\gamma_i$.  Indeed, it is given by the  map $1\otimes \phi'\co M\otimes_{\zz[\pi_p]}\zz[\pi]\to M\otimes_{\zz[H]}\zz[G]$.
  \end{proof}
  
   \section{Example:  $M = \ff\var$}  
   Suppose that the $CW$--complex $X$ admits a surjective map $\epsilon \co \pi \to \zz$.  Let $X_p$ be the associated $p$--fold cyclic cover of $X$, with fundamental group $\pi_p=\pi_1(X_p)$.
    Fix a field $\ff$, and consider the group $\zz = \langle t \rangle$ (i.e. written multiplicatively).   Let $N$ denote the abelian group $\ff\var$ with the $(\ff\var, \zz[\zz])$--bimodule
structure  defined by $f(t) \cdot q \cdot p(t) = f(t) p(t) q$.  
  Let $p\zz \subset \zz$ denote the subgroup of index $p$, and let $\psi\co p\zz \to \zz$ denote the isomorphism $\psi(t^{pk})=t^k$.  
  
 Consider $M=N^{\psi \circ \epsilon}$, that is $\ff\var$ with the $(\ff\var, \zz[\pi_p])$--bimodule structure where $\gamma\in \pi_p$ acts by $q\cdot \gamma= q t^{\frac{\epsilon(\gamma)}{p}}$.  Set $M'=\Ind_{\pi_p}^\pi(M)$.  
  In the following theorem we  identify $M'=\Ind_{\pi_p}^\pi (\ff\var)$ and apply  Shapiro's Lemma to interpret $H_1(X_p;M)$ in terms of $H_1(X; M')$.

 \begin{theorem} \label{alex2}  The $(\ff\var,\zz[\pi])$--bimodule $M'=\Ind_{\pi_p}^\pi( \ff\var)$ is isomorphic to $( \ff\var )^p$ with the left $\ff\var$--action given by multiplication and the right $\zz[\pi]$--action given by 
 $$v\cdot \gamma= vA^{\epsilon(\gamma)} \text{ for } v\in ( \ff\var )^p, \gamma\in \pi$$
 where $A$ denotes the matrix   
\begin{equation*} 
A =  \begin{pmatrix}
 0&1& \cdots &0\\ 
\vdots&\vdots & \ddots&  \vdots\\
 0&0& \cdots &1\\
t&0& \cdots &0
\end{pmatrix}.
\end{equation*}
 Hence $H_1(X_p;\ff\var)\cong H_1(X;(\ff\var)^p)$ as left $\ff\var$ modules.
 \end{theorem}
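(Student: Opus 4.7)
The plan is to spell out the direct-sum decomposition from \eqref{split} of $M'=\Ind_{\pi_p}^\pi(M)$ using a convenient set of coset representatives, then read off the right $\pi$-action from the action of a single generator of $\pi/\pi_p$.

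Because $\epsilon$ is surjective, I can choose $\gamma\in\pi$ with $\epsilon(\gamma)=1$. Since $\epsilon$ induces an isomorphism $\pi/\pi_p\cong\zz/p\zz$, the elements $1,\gamma,\gamma^2,\ldots,\gamma^{p-1}$ form a complete set of coset representatives for $\pi_p\backslash\pi$. Equation \eqref{split} then gives $M'=\bigoplus_{i=0}^{p-1} M\otimes\gamma^i$, and since $M=\ff\var$ as a left $\ff\var$-module, this identifies $M'$ with $(\ff\var)^p$; I will work with the basis $e_i:=1\otimes\gamma^{i-1}$ for $i=1,\ldots,p$, which immediately makes the left $\ff\var$-structure the asserted one.

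Next I check that the right $\pi$-action on $M'$ factors through $\epsilon$, so that it suffices to compute how $\gamma$ acts. If $\epsilon(h)=0$, then for each $i$ one has $\gamma^{i-1}h\gamma^{-(i-1)}\in\ker\epsilon\subset\pi_p$, and this element acts on $M$ as multiplication by $t^{0/p}=1$; the coset-action formula following \eqref{split} then shows $h$ acts trivially on each $M\otimes\gamma^{i-1}$. Thus the action of any $g\in\pi$ coincides with that of $\gamma^{\epsilon(g)}$. I now compute $e_i\cdot\gamma$: for $1\le i\le p-1$, $\pi_p\gamma^{i-1}\gamma=\pi_p\gamma^i$ and $\gamma^{i-1}\gamma\gamma^{-i}=1\in\pi_p$, so $e_i\cdot\gamma=e_{i+1}$; for $i=p$, $\pi_p\gamma^{p-1}\gamma=\pi_p\cdot 1$ and $\gamma^p\in\pi_p$ acts on $M$ as multiplication by $t^{\epsilon(\gamma^p)/p}=t$, so $e_p\cdot\gamma=t\,e_1$. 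These are precisely the rows of $A$, so $v\cdot\gamma=vA$, and hence $v\cdot g=vA^{\epsilon(g)}$ for every $g\in\pi$, as claimed.

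The final statement $H_1(X_p;\ff\var)\cong H_1(X;(\ff\var)^p)$ as left $\ff\var$-modules is then immediate: Shapiro's Lemma applied to the bimodule $M$ gives $H_1(X_p;M)\cong H_1(X;M')$, and the identification of $M'$ established above turns the right-hand side into $H_1(X;(\ff\var)^p)$ with the stated $\pi$-action. The main thing to get right is the factorization of the action through $\epsilon$ and the bookkeeping of which coset $\pi_p\gamma^{i-1}\gamma$ represents; once these are nailed down, the matrix $A$ essentially writes itself and no further work is needed.
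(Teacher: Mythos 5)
Your argument is correct, and it reaches the matrix $A$ by the same underlying mechanism as the paper (the decomposition of Equation (\ref{split}) together with the coset-action formula that follows it), but it is organized differently. The paper first invokes Proposition \ref{square} with $H=p\zz\subset G=\zz$ to rewrite $M'=\Ind_{\pi_p}^{\pi}(N^{\psi\circ\epsilon})$ as $\bigl(\Ind_{p\zz}^{\zz}(N^{\psi})\bigr)^{\epsilon}$, and then computes the induced module downstairs with coset representatives $1,t,\dots,t^{p-1}$; the pullback along $\epsilon$ is what makes a general $\gamma\in\pi$ act by $A^{\epsilon(\gamma)}$. You instead stay upstairs in $\pi$, choose a lift $\gamma$ with $\epsilon(\gamma)=1$ and representatives $1,\gamma,\dots,\gamma^{p-1}$, and verify by hand that elements of $\ker\epsilon$ act trivially (using $\ker\epsilon\subset\pi_p$ and that $\gamma^{i}h\gamma^{-i}$ acts on $M$ by $t^{0/p}=1$), so that the action factors through $\epsilon$ and is determined by the single computation of $\gamma$'s action, which correctly yields the cyclic shift with a $t$ appearing when $\gamma^{p}\in\pi_p$ acts by $t^{\epsilon(\gamma^p)/p}=t$. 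In effect you reprove the relevant instance of Proposition \ref{square} directly, which makes your proof more self-contained (and independent of the choice of lift, as your factorization argument shows), while the paper's route is more modular, reusing the naturality statement it has already established; the bookkeeping and the resulting matrix are identical. The only point you leave implicit is that $v\cdot\gamma^{-1}=vA^{-1}$ for negative exponents, which is immediate since $A$ is invertible over $\ff\var$, and the final homology statement via Shapiro's Lemma is exactly as in the paper.
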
 
  \begin{proof}
  Applying Proposition~\ref{square} to the 
  subgroup $H=p\zz$ of $G=\zz$ 
shows that $M'=\Ind_{\pi_p} ^\pi (N^{(\epsilon\circ \psi)}) =  \Ind_{\pi_p} ^\pi \left( (N^\psi) ^\epsilon\right) =
\left( \Ind_{p\zz} ^\zz (N^\psi) \right)^\epsilon.$ 
  
  From Equation~(\ref{split}), $$ \Ind_{p\zz} ^\zz (N^\psi) =\bigoplus_{i=0}^{p-1} 
  \ff \var \otimes t^i,$$ where right multiplication by the generator $t$ for $\zz$ acts on the left $\ff\var$--module basis $\{ 1\otimes t^i \mid i=0,\dots, p-1\}$ by the matrix $A$.  The homology identification is immediate from Shapiro's Lemma.  
 \end{proof}

 As a corollary we derive the  known relationship between the Alexander polynomial of a knot and that of its $p$--fold cyclic cover.
 
   \begin{corollary}\label{corpoly}  If $X$ is the complement of a knot $K$, then the order  of $H_1(X_p; \qq\var)$  as  a $\qq\var$--module   is $\prod_{i=0}^{p-1}\Delta_K(\zeta_p^i t^{1/p})$, where $\Delta_K$ is the Alexander polynomial of $K$ and $\zeta_p$ is a primitive $p$--root of unity.

\end{corollary}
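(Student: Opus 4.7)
The plan is to apply Theorem~\ref{alex2} to convert the cyclic-cover homology into twisted homology of $X$ itself, and then to diagonalize the matrix $A$ over a sufficiently large PID so as to reduce the computation to orders of classical Alexander modules. Setting $\ff=\qq$ in Theorem~\ref{alex2} gives an identification $H_1(X_p;\qq\var)\cong H_1(X;(\qq\var)^p)$ of $\qq\var$-modules, where the right $\pi$-action on $(\qq\var)^p$ factors through $\epsilon$ as $\gamma\mapsto A^{\epsilon(\gamma)}$. So it suffices to compute the $\qq\var$-order of $H_1(X;(\qq\var)^p)$.

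The next step is to diagonalize $A$. Expanding $\det(\lambda I - A)$ along the first column yields $\lambda^p - t$, so over the PID $R=\qq(\zeta_p)[t^{\pm 1/p}]$ the matrix $A$ has the $p$ distinct eigenvalues $\zeta_p^i t^{1/p}$, $i=0,\dots,p-1$, and is therefore diagonalizable. Since $R$ is a free, hence flat, $\qq\var$-module, tensoring splits $(\qq\var)^p\otimes_{\qq\var} R\cong \bigoplus_{i=0}^{p-1} R_i$, where $\gamma\in\pi$ acts on $R_i$ by multiplication by $(\zeta_p^i t^{1/p})^{\epsilon(\gamma)}$. Flatness then gives
$$H_1(X;(\qq\var)^p)\otimes_{\qq\var} R \;\cong\; \bigoplus_{i=0}^{p-1} H_1(X;R_i),$$
and each summand is the classical rational Alexander module of $K$ after the substitution $t\mapsto \zeta_p^i t^{1/p}$, so has $R$-order $\Delta_K(\zeta_p^i t^{1/p})$.

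The final step is to compare orders over $\qq\var$ and over $R$. Using the elementary-divisor decomposition of a finitely generated torsion $\qq\var$-module and the observation that flat base change between PIDs preserves cyclic summands, the $R$-order of $M\otimes_{\qq\var} R$ is the image of the $\qq\var$-order of $M$ under $\qq\var\hookrightarrow R$, up to units in $R$. This identifies the $\qq\var$-order of $H_1(X;(\qq\var)^p)$ with $\prod_{i=0}^{p-1}\Delta_K(\zeta_p^i t^{1/p})$, and invariance of this product under the Galois group of $R/\qq\var$ (which permutes the factors) confirms that it lies in $\qq\var$. The step that will require the most care is this final base-change argument, where one must check that passing between $\qq\var$- and $R$-module structures does not introduce spurious factors outside the units of $\qq\var$.
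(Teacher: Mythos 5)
Your proposal is correct and matches the paper's own argument essentially step for step: both invoke Theorem~\ref{alex2}, base-change along the flat extension $\qq\var\subset\qq[\zeta_p][t^{\pm 1/p}]$ (your $\qq(\zeta_p)[t^{\pm1/p}]$ is the same ring), diagonalize $A$ with eigenvalues $\zeta_p^i t^{1/p}$, and use that orders of torsion modules are preserved under this base change to get the product of the $\Delta_K(\zeta_p^i t^{1/p})$. The only cosmetic addition is your closing Galois-invariance remark, which the paper leaves implicit.
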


\begin{proof}  To make the notation transparent, write $R=\qq\var$ and $S=\qq[\zeta_p][  t^{\pm\frac{1}{p}}  ]$.  

The inclusion $R\subset S$ of principal ideal domains induces  
 an $(S,R)$--bimodule structure on $S$.   As a right $R$--module, $S$ is free of rank $p^2-p$, and hence flat.   Thus for any left $R$--chain complex $C_*$, 
  $H_i(S\otimes_{R}C_*)\cong S\otimes_{R}H_i(C_*).$ 
In particular the left $S$--modules 
 $H_1(X; S ^p)=H_1(X; S \otimes_{R}R^p)$ 
and
 $S\otimes_{R}H_1(X;R^p)$ 
are isomorphic.  

Since $S\otimes_R(R/(d))\cong S/(d)$,   it follows by the observations above that the order of the torsion of $H_1(X;R^p)$ is sent to the order of the torsion of
$H_1(X; S^p)$ via the inclusion $R\subset S$.

  Theorem~\ref{alex2}  implies that the order of the torsion of $H_1(X_p;R)$ (that is,  the Alexander polynomial of the cover $X_p$) is equal to the order of the torsion of $H_1(X;R^p)$, and so is sent to the order of the torsion of $H_1(X;S^p)$ via $R\subset S$.

The matrix $A$ is conjugate over $S$  to the diagonal matrix with entries $\zeta_p^i t^{\frac{1}{p}}$ on the diagonal.    Hence 
$$H_1(X;S^p)\cong \bigoplus_{i=1}^p H_1(X;S)$$
where the 1--dimensional action on the $i$th  summand is defined by having the meridian act as multiplication by $\zeta_p^i t^{\frac{1}{p}}$.

Since the 1--dimensional representation sending the meridian to the matrix $ (t)$ defines the Alexander polynomial, the order of the torsion of the $i$th summand is $\Delta_K(\zeta_p^i t^{\frac{1}{p}})$. Therefore the order of the sum (and hence of $H_1(X;S^p)$) is the product of the $\Delta_K(\zeta_p^i t^{\frac{1}{p}})$, as desired.
\end{proof}


\section{Structure of finitely generated  $\ff_q [\zz_p]$--modules and metabelian representations.}\label{sectionstructure}

In the remainder of the paper, we will denote the group $\zz/p\zz$ by $ \zz_p$.  We will denote by $\ff_q$ the field with $q$ elements.   In this section, we  examine the structure of the $\ff_q$ vector spaces $V$ with $\zz_p$ actions, that is, $\ff_q[\zz_p]$--modules. Fix  $p,q$   distinct prime positive integers.   Given   $f  \in \ff_q[\zz_p]$, let $R_f$ denote the quotient  of $\ff_q[\zz_p]$ by the principal ideal generated by $f$. Typically we write $f\in \ff_q[\zz_p]$ as a polynomial in $x$ which divides $x^p-1$.

\begin{prop}\label{prop4.4}  Let $k\ge 1$ and suppose that $p$ divides $q^{n}-1$ but does not divide $q^{k}-1$ for $k<n$. Then 
in $\ff_q[x]$,
$$x^p-1=(x-1)  \prod_{k=1}^{(p-1)/n} f_i(x)$$
where each $f_i$ has degree $n$ and is irreducible over $\ff_q$.  Moreover, the $f_i$ are  relatively prime (and relatively prime to $x-1$).
\end{prop}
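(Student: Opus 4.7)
The plan is to use the structure of the splitting field of $x^p-1$ over $\ff_q$, exploiting the standard fact that finite field extensions of $\ff_q$ are determined by their degree.

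First, I would observe that $\gcd(p,q)=1$ (since the hypothesis $p\mid q^n-1$ rules out $p=q$), so the derivative $px^{p-1}$ is a nonzero polynomial sharing no roots with $x^p-1$. Therefore $x^p-1$ is separable, which both gives the pairwise coprimality of distinct irreducible factors (they share no root in the algebraic closure) and lets me identify the factorization with the orbit structure of roots under the Galois group.

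Next, I would identify the splitting field of $x^p-1$ over $\ff_q$. The $p$-th roots of unity form a cyclic group of order $p$, and they all lie in $\ff_{q^m}$ precisely when $p \mid q^m-1$. By the minimality assumption, the smallest such $m$ is $n$, so the splitting field is exactly $\ff_{q^n}$, and every primitive $p$-th root of unity $\zeta$ satisfies $[\ff_q(\zeta):\ff_q]=n$. Consequently the minimal polynomial of any such $\zeta$ over $\ff_q$ is irreducible of degree $n$.

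The factor $x-1$ accounts for the root $1$, and the remaining $p-1$ primitive $p$-th roots of unity partition into Galois orbits under the Frobenius $\alpha\mapsto \alpha^q$. Each orbit has size $n$ (the degree of the minimal polynomial) and corresponds to exactly one irreducible factor $f_i$ of degree $n$. Thus there are $(p-1)/n$ such factors, yielding the claimed factorization
\[
x^p-1 = (x-1)\prod_{i=1}^{(p-1)/n} f_i(x).
\]
The coprimality of the $f_i$ among themselves and with $x-1$ is immediate from separability, since distinct irreducible factors cannot share roots.

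There is no real obstacle here; the only subtlety is confirming that $n$ is genuinely the degree of the extension $\ff_q(\zeta)/\ff_q$ for a primitive $p$-th root $\zeta$, which follows from the hypothesis that $n$ is the multiplicative order of $q$ modulo $p$. The rest is a routine application of the standard theory of finite fields.
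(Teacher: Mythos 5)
Your proof is correct and follows essentially the same route as the paper's: identify $\ff_{q^n}$ as the splitting field of $x^p-1$ using the hypothesis that $n$ is the order of $q$ modulo $p$, conclude that each primitive $p$-th root of unity has minimal polynomial of degree exactly $n$ over $\ff_q$, and use the derivative $px^{p-1}$ (nonzero since $p\neq q$) to get separability and hence the distinctness and coprimality of the factors. The only cosmetic difference is that you make the Frobenius-orbit count explicit and place the separability argument up front, whereas the paper invokes it at the end.
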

 \begin{proof}  The group of units in $\ff_{q^k}$ is an abelian group of order $q^k-1$.  Thus $\ff_{q^n}$ contains   exactly $p-1$ primitive $p$--roots of unity. However, $\ff_{q^{n-1}}$ contains no nontrivial $p$--roots of unity.  Thus, $\ff_{q^n}$ is the splitting field for $x^p -1$.  
 
 Given this, each primitive $p$--root of unity satisfies an irreducible polynomial over $\ff_q$ of degree exactly $n$.  This yields the desired factorization, though at this point the $f_i$ are not clearly distinct.  However, if $x^p -1$ had a factor with multiplicity greater than one,  $x^p-1$ and its derivative would have a common factor.  
 \end{proof}
 
We let $\ell=(p-1)/n$  and denote by $f_1,f_2,\cdots,f_\ell$ the irreducible factors over $\ff_q$ of $ 1+x+\cdots+x^{p-1}$ and  let $f_0=x-1$.  Since $\ff_q[x]$ is a principal ideal domain,
 \begin{equation}\label{irred}
\ff_q[\zz_p]\cong R_{f_0}\oplus\cdots\oplus R_{f_\ell}.
\end{equation}
Replacing $x$ by $x^{-1}$ preserves $x^{p}-1$ up to powers of $x$, so each $f_i$ is either symmetric, $f_i(x)= f_i(x^{-1})$ up to a unit, or else has a conjugate $f_j$, $j \ne i$, so that $f_j(x)=f_i(x^{-1})$ up to a unit.
 
Every finitely generated $\ff_q[\zz_p]$--module $V$ has a canonical decomposition into its $f_i$--primary parts of the form
\begin{equation*}\label{decompofM} V=V_{f_0}\oplus V_{f_1}\oplus\cdots \oplus V_{f_\ell}
\end{equation*}
where  $V_{f_i}= \{v \in V\ | \ f_i v=0\}$. In particular, there exist natural projections $V\to V_{f_i}$ for each $i$. Each summand is isomorphic to direct sum of copies of 
 $   R_{f_i} $.

 \medskip

If $V$ and $V'$ are $\ff_q[\zz_p]$--modules, let $\Hom_{\ff_q[\zz_p]}(V,V')$ denote the set of  $\zz_p$--equivariant $\ff_q$--vector space homomorphisms from $V$ to $V'$, that is,  the $\ff_q[\zz_p]$ module homomorphisms.  
 
We now recall Schur's lemma~\cite{dummit} in the present context. 
 \begin{lemma}\label{schur}   There are isomorphisms $$\Hom_{\ff_q[\zz_p]}(R_{f_i},R_{f_j})\cong \left\{ \begin{array}{ll} 0 & \mbox{ if } i\neq j\\
  R_{f_i} & \mbox{ if } i=j .\end{array} \right.  $$
 Elements of $\Hom_{\ff_q[\zz_p]}(R_{f_i},R_{f_i})$ are expressed as multiplication by elements of $\ff_q[\zz_p]$.  In particular, an    $\ff_q$ basis for $\Hom_{\ff_q[\zz_p]}(R_{f_i},R_{f_i})$ is given by multiplication by $1,x,x^2,\cdots,x^{n-1}$ where $n=\deg(f_i)$. \qed

 \end{lemma}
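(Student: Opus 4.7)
The plan is to exploit the fact that $R_{f_i}$ is a cyclic $\ff_q[\zz_p]$--module generated by $1$, with annihilator exactly the principal ideal $(f_i)$. This will reduce the question of computing Hom to a divisibility question in $\ff_q[x]$, which the coprimality established in Proposition \ref{prop4.4} resolves immediately.

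First, I would observe that any $\ff_q[\zz_p]$--linear map $\phi \co R_{f_i} \to R_{f_j}$ is completely determined by the single element $w = \phi(1) \in R_{f_j}$, via the formula $\phi(g) = g \cdot w$ for $g \in R_{f_i}$. Conversely, an element $w \in R_{f_j}$ defines such a $\phi$ if and only if the assignment is well defined, i.e., if and only if $f_i \cdot w = 0$ in $R_{f_j}$; equivalently, $f_j \mid f_i w$ in $\ff_q[x]$. Thus
\[
\Hom_{\ff_q[\zz_p]}(R_{f_i}, R_{f_j}) \;\cong\; \{w \in R_{f_j} : f_i w = 0 \text{ in } R_{f_j}\},
\]
where the isomorphism sends $\phi$ to $\phi(1)$.

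Next, I split into cases. If $i \neq j$, Proposition \ref{prop4.4} guarantees that $f_i$ and $f_j$ are relatively prime in the principal ideal domain $\ff_q[x]$, so the class of $f_i$ is a unit in $R_{f_j} = \ff_q[x]/(f_j)$. The equation $f_i w = 0$ then forces $w = 0$, giving $\Hom_{\ff_q[\zz_p]}(R_{f_i}, R_{f_j}) = 0$. If $i = j$, then every $w \in R_{f_i}$ automatically satisfies $f_i w = 0$, so the correspondence $w \mapsto (\text{multiplication by } w)$ gives an isomorphism $R_{f_i} \cong \Hom_{\ff_q[\zz_p]}(R_{f_i}, R_{f_i})$; this is even a ring isomorphism.

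Finally, for the basis statement, I would use the standard description of $R_{f_i} = \ff_q[x]/(f_i)$ when $f_i$ is (taken monic and) of degree $n$: the residues of $1, x, x^2, \ldots, x^{n-1}$ form an $\ff_q$--basis by the division algorithm. Transporting this basis through the isomorphism of the previous paragraph yields the claimed $\ff_q$--basis of endomorphisms. No step here is genuinely difficult; the only place one must be a bit careful is verifying that the well-definedness condition really is $f_j \mid f_i w$ (so that $\phi$ descends from multiplication on $\ff_q[\zz_p]$), and the main substantive input is the coprimality from Proposition \ref{prop4.4}.
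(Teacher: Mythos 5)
Your argument is correct. The paper itself offers no written proof: it simply labels the statement as Schur's lemma and cites a standard reference, the implicit reasoning being that each $R_{f_i}=\ff_q[x]/(f_i)$ is a simple $\ff_q[\zz_p]$--module (as $f_i$ is irreducible), the $R_{f_i}$ are pairwise non-isomorphic, and Schur's lemma then gives vanishing of $\Hom$ for $i\neq j$ and identifies the endomorphisms of $R_{f_i}$ with the field $R_{f_i}$ itself. Your route replaces this citation with a direct computation: a homomorphism out of the cyclic module $R_{f_i}$ is determined by the image $w$ of $1$, it exists precisely when $f_i w=0$ in $R_{f_j}$, and the coprimality of the $f_i$ from Proposition \ref{prop4.4} makes $f_i$ a unit mod $f_j$ for $i\neq j$, forcing $w=0$, while for $i=j$ every $w$ works and the division algorithm gives the basis $1,x,\dots,x^{n-1}$. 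This is a genuinely self-contained and slightly more general argument (the $i\neq j$ vanishing uses only coprimality, not irreducibility), and it has the added virtue of producing the explicit description of endomorphisms as multiplication operators and the stated $\ff_q$--basis, which the bare appeal to Schur's lemma would still require a small supplementary computation to extract. Your one point of care --- that well-definedness amounts to $f_j \mid f_i w$ --- is handled correctly.
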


\subsection{Semi-direct products}

 Let $V$ be a $\ff_q[\zz_p]$--module. We view  $\zz_p $ as a multiplicative group, generated by an element $x$ and  denote the action of $x^i \in \zz_p$ on $v\in V$ by $    x^i\cdot v$.
The semi-direct product $\zz_p \ltimes V $ is the set of  pairs $(x^i,v )$  with   multiplication   given by $$(x^i,v)( x^j, w) = (x^{i+j}, x^{-j} \cdot v + w    ).$$   

Note that $V$ inherits a $\zz$--action from the reduction map $\zz \to \zz_p$, so we can also form the semi-direct product $\zz \ltimes V$.  The subgroup $(p\zz) \ltimes V$ is isomorphic to a product, but we will continue to write it as a semi-direct product to highlight that it is a subgroup of $\zz \ltimes V$.
Note that $(x^i, v) = (1, x^i \cdot v) (x^i, 0).$ 
 
 \bigskip 
 
Take $X,X_p$, $\pi=\pi_1(X)$, $\pi_p=\pi_1(X_p)$, $\epsilon\co \pi\to \zz$ as in Section~\ref{section2}.  Fix an $m\in \pi$ satisfying $\epsilon(m)=1$. Then  conjugation by $m$ induces an automorphism of $\pi_p$. This automorphism in turn  induces an order $p$ automorphism of $H_1(X_p)$ which coincides with the action of the corresponding covering transformation.  

Given a left $\ff_q[\zz_p]$--module $V$, any (group) homomorphism $\rho\co \pi_p\to V$  factors through $H_1(X_p)$.  Call such a homomorphism  {\em equivariant}  provided $\rho(m\gamma m^{-1})=x\cdot \rho(\gamma)$.

Fix a $\ff_q[\zz_p]$--module $V$ with no nonzero elements fixed by $x$, that is,  if $V_{f_0}=0$.   In our applications we will take $V=H_1(X_p;\zz_q)_{f_i}$ for some $i>0$, and $\rho\co \pi_p\to H_1(X_p;\zz_q)_{f_i}$ the composite of the Hurewicz map $h \co \pi_p\to H_1(X_p;\zz_q)$ and the projection $H_1(X_p;\zz_q)\to H_1(X_p;\zz_q)_{f_i}$ to the $f_i$-primary component.

Then any  equivariant homomorphism $\rho\co \pi_p\to V$ satisfies $\rho(m^p)=0$, since  $\rho(m^p)=\rho(m m^p m^{-1})=x\cdot \rho(m^p)$.  Then $\rho$ extends to homomorphisms  $\pi  \to\zz\ltimes V$ (resp. $\pi\to \zz_p\ltimes V$)
and the formula 
\begin{equation}\label{extension}\tilde{\rho}(\gamma)=(x^{\epsilon(\gamma)},\rho(m^{-\epsilon(\gamma)}\gamma)),\end{equation} where $x$ denotes   the generator of $\zz$ (resp. of $\zz_p$). This extension satisfies   
$\tilde{\rho}(m)=x$, and is the unique extension of $\rho$    with this property. We will use the notation $\tilde{\rho}$ in either case depending on context. Notice that the second is obtained from the first by reducing the first factor modulo $p$.

Summarizing: 
\begin{prop}\label{propononetoone} If $V$ has no fixed vectors,  Formula {\rm (\ref{extension})} defines a one-to-one correspondence between equivariant homomorphisms $\rho\co \pi_p\to V$ and homomorphisms  
$\tilde{\rho} \co \pi  \to\zz\ltimes V$ (resp. $\tilde{\rho} \co \pi  \to\zz_p\ltimes V$) satisfying $\tilde{\rho}(m)=x$.  The restriction $\pi_p\to p\zz\ltimes V=p\zz\times V$ of $\tilde{\rho}\co \pi\to \zz\ltimes V$ to $\pi_p$  coincides with $\epsilon\times \rho$.
\qed
\end{prop}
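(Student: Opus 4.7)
The plan is to verify that Formula~(\ref{extension}) is a well-defined homomorphism, to establish uniqueness, and to construct an inverse map, with the hypothesis $V_{f_0}=0$ entering only at one place to force $\rho(m^p)=0$.

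First I would check that the formula is well-defined. Since $\epsilon(m^{-\epsilon(\gamma)}\gamma)=0$, the element $m^{-\epsilon(\gamma)}\gamma$ lies in $\pi_p$, so $\rho$ may be applied. Next I would establish $\rho(m^p)=0$: from equivariance, $\rho(m^p)=\rho(m\cdot m^p\cdot m^{-1})=x\cdot\rho(m^p)$, so $\rho(m^p)\in V_{f_0}=0$. By iteration, $\rho(m^{pk})=0$ for all $k\in\zz$.

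Second I would verify that $\tilde{\rho}$ is a homomorphism. Writing $\alpha=m^{-\epsilon(\gamma_1)}\gamma_1$ and $\beta=m^{-\epsilon(\gamma_2)}\gamma_2$ (both in $\pi_p$), one computes
\[
m^{-\epsilon(\gamma_1)-\epsilon(\gamma_2)}\gamma_1\gamma_2=(m^{-\epsilon(\gamma_2)}\alpha m^{\epsilon(\gamma_2)})\beta,
\]
and the equivariance relation $\rho(m^{-k}\alpha m^{k})=x^{-k}\cdot\rho(\alpha)$ (applied to $k=\epsilon(\gamma_2)$) together with the additivity of $\rho$ on $\pi_p$ converts this into the expression
\[
x^{-\epsilon(\gamma_2)}\cdot\rho(\alpha)+\rho(\beta),
\]
which is precisely the second coordinate produced by the semi-direct product multiplication applied to $\tilde{\rho}(\gamma_1)\tilde{\rho}(\gamma_2)$. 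The first coordinate matches trivially from additivity of $\epsilon$. Setting $\gamma=m$ gives $\tilde{\rho}(m)=(x,\rho(1))=x$. For $\gamma\in\pi_p$, we have $\epsilon(\gamma)\in p\zz$, so $\rho(m^{-\epsilon(\gamma)})=0$ by the previous step, and $\tilde{\rho}(\gamma)=(x^{\epsilon(\gamma)},\rho(\gamma))=(\epsilon\times\rho)(\gamma)$ as required.

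Third, for uniqueness, any homomorphism $\tilde{\rho}'\co\pi\to\zz\ltimes V$ with $\tilde{\rho}'(m)=x=(x,0)$ satisfies $\tilde{\rho}'(m)^{k}=(x^{k},0)$ by induction, so the decomposition $\gamma=m^{\epsilon(\gamma)}\cdot(m^{-\epsilon(\gamma)}\gamma)$ forces $\tilde{\rho}'$ to be determined by its restriction to $\pi_p$; any such restriction yielding the prescribed second-coordinate map $\rho$ must reproduce~(\ref{extension}). Conversely, given any homomorphism $\tilde{\rho}\co\pi\to\zz\ltimes V$ with $\tilde{\rho}(m)=x$, I would define $\rho$ to be the second coordinate of $\tilde{\rho}\big|_{\pi_p}$ and check that it is equivariant by a direct computation of $\tilde{\rho}(m)\tilde{\rho}(\gamma)\tilde{\rho}(m)^{-1}$ inside $\zz\ltimes V$, which produces $x\cdot\rho(\gamma)$ in the second coordinate. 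The $\zz_p$ version is obtained by composing with the reduction $\zz\to\zz_p$; the bijection is preserved since the condition $\tilde{\rho}(m)=x$ pins down the lift uniquely.

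The main obstacle is bookkeeping in Step~2: the semi-direct product multiplication produces a twist $x^{-\epsilon(\gamma_2)}\cdot\rho(\alpha)$ which must be matched by rewriting $m^{-\epsilon(\gamma_1)-\epsilon(\gamma_2)}\gamma_1\gamma_2$ as the product of two elements of $\pi_p$ and invoking equivariance exactly once; the remainder of the argument is formal and shared with the $\zz_p$ case.
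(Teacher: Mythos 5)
Your verification is correct and is essentially the check the paper leaves to the reader (the proposition is stated with its proof absorbed into the preceding discussion): the same use of the no-fixed-vector hypothesis to force $\rho(m^p)=0$, the same semidirect-product computation showing that Formula (\ref{extension}) gives a homomorphism with $\tilde{\rho}(m)=x$ whose restriction to $\pi_p$ is $\epsilon\times\rho$, and the same decomposition $\gamma=m^{\epsilon(\gamma)}\,(m^{-\epsilon(\gamma)}\gamma)$ for uniqueness; your bookkeeping of the twist $x^{-\epsilon(\gamma_2)}\cdot\rho(\alpha)$ is exactly right.

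One step in your inverse construction needs an extra word. For an arbitrary homomorphism $\tilde{\rho}\co\pi\to\zz\ltimes V$ with $\tilde{\rho}(m)=x$, the second coordinate of $\tilde{\rho}|_{\pi_p}$ is a homomorphism, and Formula (\ref{extension}) applied to it reproduces $\tilde{\rho}$, only after you know that the first coordinate of $\tilde{\rho}$ is $\epsilon$ (respectively the mod $p$ reduction of $\epsilon$), equivalently that the first coordinate of $\tilde{\rho}(\gamma)$ acts trivially on $V$ for $\gamma\in\pi_p$; otherwise the multiplication rule $(x^i,v)(x^j,w)=(x^{i+j},x^{-j}\cdot v+w)$ twists the second coordinate and your map $\rho$ need not be additive. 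The condition $\tilde{\rho}(m)=x$ alone does not force this for a general $\pi$ with a chosen surjection $\epsilon$, but it is automatic in the setting the paper cares about, where $X$ is a knot complement and every homomorphism $\pi\to\zz$ (or $\zz_p$) sending the meridian $m$ to the generator coincides with $\epsilon$ (or its reduction); this is also what the paper implicitly means by calling $\tilde{\rho}$ the unique extension of $\rho$. Adding that one observation closes the argument.
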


In the case when $X$ is a knot complement $S^3-K$ and $X_p$ its cyclic cover, we take $m\in \pi$  a meridian.  Then   a homomorphism $\rho\co\pi_p\to V$ satisfies $\rho(m^p)=0$ if and only if $\rho$ 
 factors through a map $\hat{\rho} \co \pi_1(B_p) \to V$ where $B_p$ denotes the $p$--fold branched cover of $K$.   
 
The first homology  $H_1(X_p;\zz_q)$ decomposes as the sum $\zz_q\oplus H_1(B_p;\zz_q)$  where the first summand is precisely the fixed submodule, that is,  $\zz_q=H_1(X_p;\zz_q)_{f_0}$.  Using Proposition~\ref{propononetoone}, the Hurewicz map $h\co \pi_p\to H_1(B_p;\zz_q)$ determines the   homomorphisms
$$\pi\to  \zz\ltimes H_1(B_p;\zz_q) \text{ and } \pi\to \zz_p\ltimes H_1(B_p;\zz_q);$$
the first of these restricts to $\epsilon\times h\co \pi_p\to p\zz\times H_1(B_p;\zz_q)$. Composing with the projections $H_1(B_p;\zz_q)\to H_1(B_p;\zz_q)_{f_i}=H_1(X_p;\zz_q)_{f_i}$ for some $i> 0$ yields the homomorphisms 
$$\pi\to  \zz\ltimes H_1(B_p;\zz_q)_{f_i} \text{ and } \pi\to \zz_p\ltimes H_1(B_p;\zz_q)_{f_i}.$$
If $V=R_{f_i}$ then every equivariant homomorphism $\rho\co H_1(X_p)\to V$  factors through $H_1(X_p;\zz_q)_{f_i}$ and so the corresponding homomorphism $\tilde{\rho}\co\pi\to \zz\ltimes V$ factors through $\zz\ltimes H_1(B_p;\zz_q)_{f_i}.$  For general $V$ one applies Lemma~\ref{schur} to reduce to this special case.  \medskip 
 
In practice, such homomorphisms $\tilde{\rho}\co \pi\to \zz\ltimes V$ are constructed in terms of Wirtinger generators $\{x_1,\cdots, x_n\}$ of a knot group by setting $\tilde{\rho}(x_i) = (x, v_i)$ (with e.g. $m=x_n$ so $v_n=0$) and checking that the Wirtinger relations are satisfied.  The Wirtinger relation $x_ix_jx_i^{-1}x_k^{-1}$  imposes the linear equation 
\begin{equation}\label{repeqs}
(1-x)\cdot v_i + x\cdot v_j - v_k = 0
\end{equation} on the $v_i\in V$.

\section{ Example: The homology group $H_1(X_p ; \qq[\zeta_p])$)}

 Suppose that $\chi \co V \to \zz_q$ is given.  Then $\chi $ and $0\times \chi:p\zz \times V \to \zz_q$  endow  $\qq[\zeta_q]$ with   $(\qq[\zeta_q], \zz[V])$-- and $(\qq[\zeta_q],\zz [p\zz \times V])$--bimodule structures:  the left action is multiplication and the right $\zz[ V]$--action is given by $\alpha  \cdot v= \alpha\zeta_q^{\chi(v)} $ for $\alpha \in  \qq[\zeta_q]$ and $ v \in V$.  The right $\zz[p\zz \ltimes V]$--action factors through projection to $\zz[V]$.

 \begin{theorem}\label{omegarep}  $\Ind_{p\zz \ltimes V} ^{  \zz    \ltimes V } ( \qq[\zeta_q])$ and $\Ind_V^{  \zz_p    \ltimes V } ( \qq[\zeta_q])$ are both  isomorphic to $\qq[\zeta_q]^p$ as  $\qq[\zeta_q]$--vector spaces.  The right action of $\zz  \ltimes V$, respectively $\zz_p  \ltimes V$, is given via the homomorphism $\tau_\chi \co  \zz   \ltimes V \to GL_p(\qq[\zeta_q])$, defined as follows.  
 
\begin{equation*}\label{A}
\tau_\chi(x,v)=\begin{pmatrix}
 0&1& \cdots &0\\ 
\vdots&\vdots & \ddots&  \vdots\\
 0&0& \cdots &1\\
1&0& \cdots &0
\end{pmatrix}
\begin{pmatrix}\zeta_q^{\chi(v)}&0&\cdots&0\\ 
0&\zeta_q^{\chi(x\cdot v)}&\cdots&0\\ 
\vdots&\vdots&\ddots&\vdots\\
0&0&\cdots&\zeta_q^{\chi(x^{p-1}\cdot v)}
\end{pmatrix}.
\end{equation*}
  Here $x$ denotes the generator of $\zz$, respectively its image in $\zz_p$.

 \end{theorem}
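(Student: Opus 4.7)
The plan is to apply the description of induced modules in Equation~(\ref{split}) in the two relevant situations. In both cases, the subgroup contains $V$, which makes the explicit description of the right action very clean.

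First I would choose coset representatives. I claim $R=\{(x^i,0)\co 0\le i<p\}$ is a complete set of representatives for both $H\backslash G$ with $H=p\zz\ltimes V$, $G=\zz\ltimes V$, and for $V\backslash (\zz_p\ltimes V)$. Indeed, using $(x^i,v)(x^j,w)=(x^{i+j}, x^{-j}\cdot v+w)$, any $(x^j,w)\in \zz\ltimes V$ with $j=pk+i$, $0\le i<p$, factors as
\[
(x^j,w)=(x^{pk},\ x^i\cdot w)\cdot (x^i,0),
\]
and $(x^{pk},x^i\cdot w)\in p\zz\ltimes V$; the statement for $V\subset\zz_p\ltimes V$ is identical with $k=0$. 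Hence by Equation~(\ref{split}), in both cases
\[
\Ind(\qq[\zeta_q])=\bigoplus_{i=0}^{p-1}\qq[\zeta_q]\otimes (x^i,0)
\]
is free of rank $p$ as a left $\qq[\zeta_q]$--module, with basis $\{1\otimes (x^i,0)\}$.

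Next I compute the right action in this basis; since $(x^k,w)=(x,0)^k(1,w)$, it is enough to handle the two cases $(x,0)$ and $(1,v)$. For $(x,0)$: we have $(x^i,0)(x,0)=(x^{i+1},0)$, so for $0\le i<p-1$ the correction element $(x^{i+1},0)(x^{i+1},0)^{-1}=e$ is trivial, and for $i=p-1$ the correction element is $(x^p,0)$, which lies in $H$ in the first case and equals the identity in the second case. In all cases it acts trivially on $\qq[\zeta_q]$ because either it lies in $p\zz$ (on which $0\times\chi$ vanishes) or it is trivial; thus $(x,0)$ acts by the cyclic shift $1\otimes(x^i,0)\mapsto 1\otimes(x^{i+1\bmod p},0)$, which is exactly the permutation matrix in the statement of $\tau_\chi$. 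For $(1,v)$: using the multiplication rule,
\[
(x^i,0)(1,v)(x^i,0)^{-1}=(1,x^i\cdot v)\in V,
\]
so the correction element lies in the subgroup (crucially, $V$ is contained in both subgroups) and acts on $\qq[\zeta_q]$ by multiplication by $\zeta_q^{\chi(x^i\cdot v)}$. Therefore $(1,v)$ acts diagonally with entries $\zeta_q^{\chi(x^i\cdot v)}$, matching the diagonal factor in the statement.

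Finally, combining the two via $(x,v)=(x,0)(1,v)$ gives $\tau_\chi(x,v)=P\cdot D(v)$, the product of the permutation matrix and the diagonal matrix as claimed. The homomorphism property of $\tau_\chi$ is automatic because it arises from a genuine right action on the induced module. The only real pitfall is the bookkeeping for the wrap-around case $i=p-1$, which is handled uniformly by noting that the $p\zz$-part acts trivially on $\qq[\zeta_q]$ (since $\chi$ was extended by $0$ on $p\zz$) and that $x^p=1$ in $\zz_p$; there is no genuine difficulty because $V$ is contained in the subgroup we are inducing from.
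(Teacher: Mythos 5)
Your proof is correct and follows essentially the same route as the paper: take the coset representatives $(x^i,0)$, $0\le i<p$, identify the induced module as a free rank--$p$ left $\qq[\zeta_q]$--module via Equation~(\ref{split}), and compute that $(x,0)$ acts by the cyclic shift while $(1,v)$ acts diagonally with entries $\zeta_q^{\chi(x^i\cdot v)}$. The paper's argument is just a terser version of this (it treats the action of $x$ as ``clear'' and only writes out the computation for $v$), so your extra bookkeeping for the wrap-around case merely fills in details the paper leaves implicit.
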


 \begin{proof} As explained above, since $1,x,x^2,\cdots, x^{p-1}$ form a complete set of coset representatives for the subgroup $p\zz \ltimes V \subset \zz \ltimes V$,   the $(\qq[\zeta_q],\zz[\zz \ltimes V])$--bimodule $\Ind_{p\zz \ltimes V}^{  \zz    \ltimes V }  \qq[\zeta_q]$ is a free left $\qq[\zeta_q]$--module on the basis
 $1\otimes1,1\otimes x,\cdots, 1\otimes x^{p-1}$.    The right action via multiplication by $x$ is clear.  Multiplying by $v \in V$ we have 
 $$ (1 \otimes x^i )\cdot v   = (1  \otimes x^i v) = (1 \otimes (x^i \cdot v) x^i ) =   (1\cdot (x^i \cdot v))\otimes   x^i   = \zeta_q^{\chi(x^i \cdot v)} \otimes x^i.$$  
 
 The case corresponding to the inclusion of  $V\subset \zz_p\ltimes V$ has the same proof. 
 \end{proof}
 
Given an equivariant map $\rho\co\pi_p\to V$ as above, the extension $\tilde{\rho}\co\pi\to\zz\ltimes V$ defined above determines a diagram
 $$ \begin{diagram}\dgARROWLENGTH=2em
\node{\pi_p}\arrow{e,t}{} \arrow{s,l}{\epsilon \times \rho}\node{\pi}\arrow{s,l}{\tilde\rho}\\
\node{p\zz \ltimes V}\arrow{e,t}{} \node{\zz \ltimes V}
\end{diagram}
 $$
Applying Theorem~\ref{omegarep}, Proposition~\ref{square},  and Shapiro's Lemma we conclude the following.

\begin{theorem} With $\chi:V\to\zz_q$ as above,  the $\qq[\zeta_q]$--module $H_1(X_p; \qq[\zeta_q])$ is isomorphic to $H_1(X; (\qq[\zeta_q])^p)$ where the representation $\pi\to GL_p(\qq[\zeta_q])$ is given by composing $\tilde{\rho}$ with the representation given in Theorem~\ref{omegarep}.

\end{theorem}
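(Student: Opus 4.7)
The plan is to chain together Shapiro's Lemma, Proposition~\ref{square}, and Theorem~\ref{omegarep}. The equivariant homomorphism $\rho\co\pi_p\to V$ and the character $\chi\co V\to \zz_q$ combine to equip $\qq[\zeta_q]$ with a right $\zz[\pi_p]$--module structure via $\chi\circ\rho$, which is precisely the action defining $H_1(X_p;\qq[\zeta_q])$. Shapiro's Lemma then gives an isomorphism of left $\qq[\zeta_q]$--modules
$$H_1(X_p;\qq[\zeta_q])\cong H_1(X;\Ind_{\pi_p}^\pi\qq[\zeta_q]),$$
so the work remaining is to identify $\Ind_{\pi_p}^\pi\qq[\zeta_q]$ with $(\qq[\zeta_q])^p$ carrying the right $\pi$--action $\tau_\chi\circ\tilde\rho$.

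First I would form the commutative square
$$\begin{diagram}\dgARROWLENGTH=2em
\node{\pi_p}\arrow{e}\arrow{s,l}{\epsilon\times\rho}\node{\pi}\arrow{s,l}{\tilde\rho}\\
\node{p\zz\ltimes V}\arrow{e}\node{\zz\ltimes V}
\end{diagram}$$
displayed just before the theorem statement, and apply Proposition~\ref{square} to obtain the $(\qq[\zeta_q],\zz[\pi])$--bimodule identification
$$\bigl(\Ind_{p\zz\ltimes V}^{\zz\ltimes V}\qq[\zeta_q]\bigr)^{\tilde\rho}\cong \Ind_{\pi_p}^\pi(\qq[\zeta_q])^{\epsilon\times\rho}.$$
The right $(p\zz\ltimes V)$--action on $\qq[\zeta_q]$ factors through projection to $V$ (recall the action of $p\zz$ is trivial), so the $\pi_p$--module $(\qq[\zeta_q])^{\epsilon\times\rho}$ on the right is just $\qq[\zeta_q]$ with action via $\chi\circ\rho$, matching the module appearing on the left side of the Shapiro isomorphism. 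Then Theorem~\ref{omegarep} identifies $\Ind_{p\zz\ltimes V}^{\zz\ltimes V}\qq[\zeta_q]$ with $(\qq[\zeta_q])^p$ whose right $(\zz\ltimes V)$--action is $\tau_\chi$; pulling back by $\tilde\rho$ produces $(\qq[\zeta_q])^p$ with right $\pi$--action $\tau_\chi\circ\tilde\rho$. Combining these identifications and passing to $H_1(X;-)$ yields the theorem.

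The main obstacle I anticipate is that $\tilde\rho$ is not in general surjective, so Proposition~\ref{square} does not apply verbatim. However, inspection of its proof shows that all that is really required is that $\tilde\rho$ induce a bijection on coset spaces $\pi_p\backslash\pi\to(p\zz\ltimes V)\backslash(\zz\ltimes V)$, and this does hold here: by formula (\ref{extension}) the element $\tilde\rho(\gamma)$ has first coordinate $x^{\epsilon(\gamma)}$, so $\pi_p=\tilde\rho^{-1}(p\zz\ltimes V)$, and the coset representatives $1,m,\dots,m^{p-1}$ of $\pi_p$ in $\pi$ map to the coset representatives $1,x,\dots,x^{p-1}$ of $p\zz\ltimes V$ in $\zz\ltimes V$. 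With this observation in hand, the decomposition $\Ind_H^G(M)\cong\bigoplus_{g_i}M\otimes g_i$ from (\ref{split}) transfers through the square exactly as in the stated proposition, and the rest of the argument goes through unchanged.
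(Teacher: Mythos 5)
Your argument is correct and is essentially the paper's own proof, which likewise concludes the theorem by chaining Shapiro's Lemma, Proposition~\ref{square}, and Theorem~\ref{omegarep} through the displayed commutative square relating $\epsilon\times\rho$ and $\tilde\rho$. Your added observation about surjectivity of $\tilde\rho$ (replaced by the bijection of coset spaces coming from $m^i\mapsto (x^i,0)$ together with $\pi_p=\tilde\rho^{-1}(p\zz\ltimes V)$) is a valid refinement of the hypotheses of Proposition~\ref{square}, which the paper applies without comment.
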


\section{$\qq[\zeta_q]\var$ representations}\label{weturn}

We turn now to the set-up relevant to our knot slicing applications. Here are the ingredients:

\begin{itemize}

\item Two distinct, positive primes $p,q$, 

\item A $CW$--complex $X$ with $\pi=\pi_1(X)$, a surjection $\epsilon\co\pi\to \zz$, $X_p\to X$ the corresponding $p$--fold covering space with fundamental group $\pi_p$, and $\epsilon' \co \pi_p \to \zz$ the corresponding surjection,
\item An irreducible $\ff_q[\zz_p]$--module $V$ and a $\zz_q$--vector space homomorphism $\chi:V\to \zz_q$, 
\item A  choice of loop $m\in\pi_1(X)$ satisfying $\epsilon(m)=1$, and 
\item A nonzero equivariant  homomorphism $\rho\co\pi_p\to V$ (if $V$ is the trivial 1-dimensional module we add the requirement that $\rho(m^p)=0$).

\end{itemize}

The requirement that $V$ be irreducible is equivalent to saying that $V$ is isomorphic to $R_{f_i}$ for one of the summands in (\ref{irred}). Since $V$ is abelian, $\rho$ factors through the $\ff_q[\zz_p]$--module $H_1(X_p;\zz_q)$, and Lemma~\ref{schur} then implies that $\rho$ is surjective, since we assumed that $\rho$ is nontrivial.   Equation (\ref{extension}) defines the unique extension of $\epsilon \times \rho$ to $\tilde{\rho}\co \pi\to \zz \ltimes V$ satisfying $\tilde{\rho}(m)=x$.

In the case when $X$ is a knot complement, the condition $\rho(m^p)=0$ implies that $\rho$ factors through an equivariant homomorphism $\hat{\rho}\co\pi_1(B_p)\to V$ with $B_p$ the cyclic branched cover.

Via   $\psi \times \chi\co p\zz \ltimes V \to \zz \times \zz_q$, $\qq[\zeta_q]\var$ is a  $(\qq[\zeta_q]\var,\zz[p\zz\ltimes V ])$--bimodule. The right action of $(x^{pk},v)\in p\zz \ltimes V$ is  multiplication by $\zeta_q^{\chi(v)}t^k$:
\begin{equation}\label{action1}
f\cdot (x^{pk},v)=\zeta_q^{\chi(v)}t^k f \text{ for } f\in \qq[\zeta]\var.
\end{equation}    Proposition~\ref{square}  and Shapiro's Lemma then gives an identification 
$$H_1(X_p;(\qq[\zeta_q]\var)^{\epsilon \times \rho} )\cong H_1(X;(\Ind_{p\zz \ltimes V}^{\zz \ltimes V}\qq[\zeta_q]\var)^{\tilde \rho}). $$ 

For simplicity, we denote the induced bimodule by $(\qq[\zeta_q]\var)^p$.  Since 
$\{ (x^i, 0) \mid i=0, \dots, p-1 \}$ form a complete set of coset representatives for the subgroup $p\zz \ltimes V \subset \zz \ltimes V$, a  basis for the induced bimodule as a left $\qq[\zeta_q]\var$--module is $\{ 1\otimes (1,0), 1\otimes (x,0), \cdots, 1\otimes (x^{p-1},0)  \}$.  As explained after Equation (\ref{split}), the right action of $(x^j,v)$ on the basis element $1\otimes(x^i,0)$ is described as follows. Choose integers $k, \ell$ with $0\leq \ell <p$ and $i+j=\ell+pk$. Then 
\begin{eqnarray*}  \big(1\otimes(x^i,0)\big)\cdot (x^j,v)&=& \big(1\cdot(x^i,0)(x^j,v)(x^{-\ell},0)\big)\otimes(x^\ell,0)\\
&=&\big(1\cdot(x^{i+j-\ell},x^\ell\cdot v)\big)\otimes (x^\ell,0)\\
&=&1\cdot(x^{pk},  x^\ell\cdot v) \otimes (x^\ell, 0)\\
&=& \zeta_q^{\chi(x^{i+j}\cdot v)}t^k\otimes (x^\ell, 0)\\
&=& \zeta_q^{\chi(x^{i+j}\cdot v)}t^k\cdot \big(1\otimes (x^\ell, 0)\big)
\end{eqnarray*}
Hence the induced right action of $(x^j, v)=(x^j,0)(1,v)=(x,0)^j(1,v)\in \zz \ltimes V$ on $(\qq[\zeta_q]\var)^p$ is given 
by the matrix:
 \begin{equation} \label{bigrep1}
 \begin{pmatrix}
 0&1& \cdots &0\\ 
\vdots&\vdots & \ddots&  \vdots\\
 0&0& \cdots &1\\
t&0& \cdots &0
\end{pmatrix}^{j}
 \begin{pmatrix}\zeta_q^{\chi(v)}&0&\cdots&0\\ 0&\zeta_q^{\chi(x\cdot v)}&\cdots&0\\ 
\vdots&\vdots&\ddots&\vdots\\
0&0&\cdots&\zeta_q^{\chi(x^{p-1}\cdot v)}
\end{pmatrix}
    \end{equation}

In summary, 

\begin{theorem} \label{bigrep}  Fix an equivariant $\rho\co\pi_p\to V$   and a character $\chi\co V\to \zz_q$. With the local coefficients defined by the actions of  Equations (\ref{action1}) and (\ref{bigrep1}) and the homomorphisms $\epsilon\times \rho\co\pi_p\to p\zz \ltimes V$ and  $\tilde\rho\co \pi \to \zz\ltimes V$, the homology groups $H_1 (X_p; \qq[\zeta_q]\var)$ and $ H_1 (X; (\qq[\zeta_q]\var)^p )$ are isomorphic as  $ \qq[\zeta_q]\var$--modules, and hence
$$\Delta_{X_p,  \qq[\zeta_q]\var} (t) = \Delta_{X,   (\qq[\zeta_q]\var)^p  }(t).$$
\qed
 \end{theorem}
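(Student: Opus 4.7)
The plan is to assemble the theorem by combining the three ingredients that have been built up in the preceding paragraphs: Shapiro's Lemma, the naturality of induced modules from Proposition~\ref{square}, and the explicit computation of $\Ind_{p\zz \ltimes V}^{\zz \ltimes V}(\qq[\zeta_q]\var)$ that was just carried out in deriving the matrix~(\ref{bigrep1}). The skeleton mirrors the proofs of Theorem~\ref{alex2} and Theorem~\ref{omegarep}, with the $\qq[\zeta_q]$ calculation upgraded to $\qq[\zeta_q]\var$ by combining the two sources of right action.

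First I would view $\qq[\zeta_q]\var$ equipped with the action of Equation~(\ref{action1}) as a $(\qq[\zeta_q]\var, \zz[\pi_p])$--bimodule via the homomorphism $\epsilon\times \rho \co \pi_p \to p\zz \ltimes V$. Applying Shapiro's Lemma to the covering $X_p \to X$ then gives an isomorphism of left $\qq[\zeta_q]\var$--modules
\[
H_1(X_p; \qq[\zeta_q]\var) \cong H_1\bigl(X; \Ind_{\pi_p}^{\pi}(\qq[\zeta_q]\var)\bigr).
\]

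Next I would identify the right $\zz[\pi]$--action on the induced module by applying Proposition~\ref{square} to the commutative square
\[
\begin{diagram}\dgARROWLENGTH=2em
\node{\pi_p}\arrow{e,t}{}\arrow{s,l}{\epsilon\times\rho}\node{\pi}\arrow{s,l}{\tilde\rho}\\
\node{p\zz\ltimes V}\arrow{e,t}{}\node{\zz\ltimes V}
\end{diagram}
\]
To invoke that proposition I need $\tilde\rho$ to be surjective with $\pi_p = \tilde\rho^{-1}(p\zz\ltimes V)$. Surjectivity follows because $\rho\co \pi_p \to V$ is surjective (as noted immediately before the theorem statement, using irreducibility of $V$ together with Lemma~\ref{schur}) and $\tilde\rho(m)=x$ supplies the $\zz$--factor; the preimage condition is simply $\pi_p = \epsilon^{-1}(p\zz)$ read off the first coordinate. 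Proposition~\ref{square} then yields $\Ind_{\pi_p}^\pi(\qq[\zeta_q]\var) \cong (\Ind_{p\zz\ltimes V}^{\zz\ltimes V}(\qq[\zeta_q]\var))^{\tilde\rho}$ as $(\qq[\zeta_q]\var, \zz[\pi])$--bimodules.

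The final step is to plug in the explicit description of $\Ind_{p\zz\ltimes V}^{\zz\ltimes V}(\qq[\zeta_q]\var)$ developed in the paragraphs preceding the theorem: with coset representatives $\{(x^i,0) : 0\le i\le p-1\}$ it is free of rank $p$ over $\qq[\zeta_q]\var$, and the right action of $(x^j,v)\in \zz\ltimes V$ is exactly the product of matrices displayed in~(\ref{bigrep1}). Composing with $\tilde\rho$ produces the $\pi$--action described in the theorem, and the equality of orders $\Delta_{X_p, \qq[\zeta_q]\var}(t) = \Delta_{X, (\qq[\zeta_q]\var)^p}(t)$ is then immediate from Definition~\ref{deftwist}. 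The only genuinely delicate point, which I would want to verify carefully, is the cocycle bookkeeping that arises because the chosen coset representatives do not form a subgroup — but this is precisely the calculation already carried out in arriving at~(\ref{bigrep1}), so there is no further obstacle.
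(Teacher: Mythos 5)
Your proposal is correct and follows essentially the same route as the paper: the paper likewise obtains the identification $H_1(X_p;(\qq[\zeta_q]\var)^{\epsilon\times\rho})\cong H_1(X;(\Ind_{p\zz\ltimes V}^{\zz\ltimes V}\qq[\zeta_q]\var)^{\tilde\rho})$ by combining Shapiro's Lemma with Proposition~\ref{square} applied to the square relating $\epsilon\times\rho$ and $\tilde\rho$, and then reads off the right action of $(x^j,v)$ on the coset-representative basis to arrive at the matrix~(\ref{bigrep1}). Your added check that $\tilde\rho$ is surjective with $\tilde\rho^{-1}(p\zz\ltimes V)=\pi_p$ (using the surjectivity of $\rho$ noted before the theorem) is exactly the hypothesis the paper tacitly relies on, so there is no gap.
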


 We finish this section with the observation that $ \Delta_{X,   (\qq[\zeta_q]\var)^p  }(t)$ can be viewed  as a twisted polynomial in the sense of \cite{kl1}.  Precisely, 
 \begin{equation}\label{istwist} \Delta_{X,   (\qq[\zeta_q]\var)^p  }(t)=\Delta_{X,\epsilon\otimes\alpha}(t^{\frac{1}{p}})\end{equation}
 for some representation $\alpha:\pi\to GL_p(\qq[\zeta_q])$.  To see this, notice that the matrices
 \[ \begin{pmatrix}0&1  &\cdots&0\\  
  \vdots&\vdots&\ddots &\vdots\\
  0&0& \cdots&1\\
 t&0&  \cdots&0\\
 \end{pmatrix}\text{  \ and  \ }  t^{\frac{1}{p}}\begin{pmatrix}0&1  &\cdots&0\\  
  \vdots&\vdots& \ddots&\vdots\\
  0&0& \cdots&1\\
 1&0&  \cdots&0\\
 \end{pmatrix}
  \]
  are conjugate   by the diagonal matrix with diagonal entries 
  $1, t^{\frac{1}{p}},\cdots, t^{\frac{p-1}{p}}$. Since   diagonal matrices commute, the action  (\ref{bigrep1})   can be conjugated over $GL_p(\qq[\zeta_q][t^{\pm\frac{1}{p}}])$ to the action which takes $(x^j,v)$ to 
  \begin{equation} \label{bigrep2}
t^{\frac{j}{p}}  \begin{pmatrix}0&1 &\cdots&0\\ 
 \vdots&\vdots &\ddots&\vdots\\
 0&0 &\cdots&1\\
1&0&  \cdots&0\\
\end{pmatrix}^{j}
 \begin{pmatrix}\zeta_q^{\chi(v)}&0&\cdots&0\\ 0&\zeta_q^{\chi(x\cdot v)}&\cdots&0\\  
\vdots&\vdots&\ddots&\vdots\\
0&0&\cdots&\zeta_q^{\chi(x^{p-1}\cdot v)}
\end{pmatrix}
    \end{equation} 
 Replacing $t^{\frac{1}{p}}$ by $t$ yields an action of the form $\epsilon\otimes \alpha$, where 
 $\alpha \co \pi \to GL_p(\qq[\zeta_q])$ is obtained by setting $t^{\frac{1}{p}}=1$ in Equation (\ref{bigrep2}). Conjugating a representation does not change the order of the torsion of the twisted homology, and so the formula (\ref{istwist}) follows.


\section{Knot slicing obstructions}\label{obstructions}
 
 As before, $p$ and $q$ denote distinct positive prime integers, and $\zeta_q$ is a primitive $q$th root of unity.  The ring $\qq[\zeta_q]\var$ admits an involution $\bar{ }\ \co \qq[\zeta_q]\var\to\qq[\zeta_q]\var$ which sends $t$ to $t^{- 1}$ and $\zeta_q^i$ to $\zeta_q^{q-i}$.
 
 \medskip
 
 We recall how twisted Alexander polynomials obstruct the slicing of knots. Given $K\subset S^3$   an oriented knot, as above we denote by $X=S^3-K$ its complement  and $\pi=\pi_1(X)$ the knot group. The orientation of $S^3$ and $K$  uniquely determine a surjection $\epsilon\co\pi_1(X)\to \zz$. An oriented meridian $m\in\pi$ for $K$ satisfies $\epsilon(m)=1$.

Let  $X_p\to X$ be the $p$--fold  cyclic cover, with fundamental group $\pi_p$,  and  $B_p\to S^3$ the $p$--fold branched cover. Then $H_1(B_p)$ is a finite abelian  group.   Denote by $\epsilon'\co\pi_p\to \zz$ the corresponding surjection.

The {\em linking form} is a nonsingular form
$$\lk\co H_1(B_p)\times H_1(B_p)\to \qq/\zz.$$
An {\em invariant metabolizer for $\lk$} is a subgroup $A\subset H_1(B_p)$ invariant under the action of the covering transformations for which $A=A^\perp$, where $A^\perp$ denotes the perpendicular subgroup to $A$ with respect to $\lk$. Any metabolizer $A$ has order the square root of the order of $H_1(B_p)$.

Since we are assuming $p$ is prime, the homology group $H_1(B_p)$ is a torsion group. It therefore has a primary decomposition
$$H_1(B_p)=\bigoplus_{q\text{ prime }} H_1(B_p)_{(q)}$$  where $\zz_{(q)}$ is $\zz$ localized at $q$ (that is, with all primes other than $q$ inverted)
and $H_1(B_p)_{(q)}=H_1(B_p)\otimes \zz_{(q)}$.  
The linking pairing between different $q$-primary components is zero.  
Any metabolizer $A\subset H_1(B_p)$ will similarly decompose into $A=\bigoplus_{q\text{ prime}} A_{(q)}$ with $A_{(q)} \subset H_1 (B_p)_{(q)}$.  From order considerations it is clear that $A_{(q)}$ is a metabolizer for $H_1 (B_p)_{(q)}$ with respect to the restricted intersection pairing, which 
defines a nondegenerate pairing
$$\lk\co H_1(B_p)_{(q)}\times H_1(B_p)_{(q)}\to \zz[\tfrac{1}{q}]/\zz . $$

 Any $\zz_q$--character on $H_1(B_p)$ annihilates all the other primary subgroups, so we can view it as a character on $H_1(B_p)_{(q)}$.  In addition, it factors through the map $H_1(B_p) \to H_1(B_p,\zz_q)$.   Thus the set of $\zz_q$--characters on $H_1(B_p)$ corresponds bijectively to $\Hom(H_1(B_p ; \zz_q), \zz_q) \cong H^1 (B_p; \zz_q)$.

In~\cite{kl1}, building on the ideas of Casson and Gordon~\cite{cg1}, the following result (adapted to the notation of the current article) was shown.

\begin{theorem}\label{slicethm} If $K$ is slice and $p,q$ are distinct  primes, with $q\ne2$, then  there exists an invariant metabolizer $A\subset H_1(B_p)$ so that for any $\chi\in H^1(B_p;\zz_q)$ which vanishes on $A$, the twisted polynomial $\Delta_{ X_p,  \qq[\zeta_q]\var}(t)$  factors as $\lambda t^k f(t)\overline{f(t)}(t-1)^e$ for some $\lambda\in \qq[\zeta_q], k\in \zz, $ and $f(t)\in \qq[\zeta_q][t,t^{-1}]$. Here $e=1$ if $\chi$ is nonzero, and $e=0$ if $\chi$ is zero. \qed
\end{theorem}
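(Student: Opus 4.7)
The plan is to adapt the Casson-Gordon slicing argument using Theorem~\ref{bigrep} to reduce statements about twisted polynomials of $X_p$ to statements about $GL_p(\qq[\zeta_q]\var)$ representations of $\pi_1(X)$ that extend over a slice disk exterior. Assume $K$ bounds a locally flat disk $D$ in $B^4$, and let $W = B^4 \setminus \nu(D)$, so that $\partial W = X$ and $H_1(W) \cong \zz$. The surjection $\epsilon\co \pi_1(X) \to \zz$ extends to $\pi_1(W)$, so the $p$-fold cyclic cover $W_p \to W$ restricts on the boundary to $X_p \to X$; capping off the branch locus yields the $p$-fold branched cover $Y_p$ of $B^4$ along $D$, with $\partial Y_p = B_p$. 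I would first produce the invariant metabolizer by setting $A := \ker(H_1(B_p) \to H_1(Y_p))$. This is invariant under the deck transformations because the inclusion $B_p \hookrightarrow Y_p$ is equivariant, and the classical half-lives-half-dies argument applied to the rational homology $4$-ball $Y_p$ (together with Poincar\'e--Lefschetz duality) gives $|A|^2 = |H_1(B_p)|$ and $A = A^\perp$ under $\lk$.

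Next I would extend the character. Given $\chi\in H^1(B_p;\zz_q)$ vanishing on $A$, the $q$-primary part $A_{(q)}$ is a metabolizer for the restriction of $\lk$ to $H_1(B_p)_{(q)}$, and since $q$ is odd and distinct from $p$ one obtains a lift $\hat\chi\co H_1(Y_p;\zz_q)\to \zz_q$ restricting to $\chi$ on the boundary. Pulling back through the projection $W_p\to Y_p$, $\hat\chi$ defines a character on $H_1(W_p;\zz_q)$. Feeding this pair $(\epsilon,\hat\chi)$ into the construction of Section~\ref{weturn} produces a representation $\tilde\rho\co \pi_1(W) \to GL_p(\qq[\zeta_q]\var)$ whose restriction to $\pi_1(X)$ is the representation of Theorem~\ref{bigrep}, so by that theorem $\Delta_{X_p,\qq[\zeta_q]\var}(t)$ equals, up to units, the order of $H_1(X;(\qq[\zeta_q]\var)^p)$ as a $\qq[\zeta_q]\var$-module, and this representation extends over $W$.

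The final step is the factorization via duality. The involution $t\mapsto t^{-1}$, $\zeta_q\mapsto \zeta_q^{-1}$ on $\qq[\zeta_q]\var$ and the matrix $(\ref{bigrep1})$ together make the coefficient module $M = (\qq[\zeta_q]\var)^p$ carry a Hermitian structure, so Poincar\'e--Lefschetz duality gives $H_i(W,X;M)\cong \overline{H^{4-i}(W;M)}$. The long exact sequence of $(W,X)$ with coefficients in $M$ then sandwiches $H_1(X;M)$ between a piece of $H_2(W,X;M)$ and a piece of $H_1(W;M)$ that are dual, up to torsion of $H_*(W;M)$, and multiplicativity of orders in this sequence forces the order of the torsion of $H_1(X;M)$ to be of the form $\lambda t^k f(t)\overline{f(t)}$ times the order of the image of $H_1(W;M)\to H_1(W,X;M)$. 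The $(t-1)^e$ correction tracks the contribution of the $\chi$-trivial direction: when $\chi\ne 0$ one of the diagonal entries of $(\ref{bigrep1})$ degenerates to $1$ after setting $\zeta_q=1$, producing a single $(t-1)$ factor from the $H_0$-type contribution, while $\chi=0$ removes it. The main technical obstacle is the careful bookkeeping of the orders in this long exact sequence, ensuring that the Hermitian duality respects the $\ff$-primary decompositions of Section~\ref{sectionstructure} and that no extraneous factors of $(t-1)$ or units are lost when passing to the image of $H_1(W;M)$.
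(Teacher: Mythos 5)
The paper itself offers no proof of Theorem~\ref{slicethm}: it is quoted from \cite{kl1} (with the remark that the hypothesis ``$p$ odd'' used there can be dropped), and the proof in that reference is exactly the Casson--Gordon-style argument you outline --- the metabolizer $A=\ker\bigl(H_1(B_p)\to H_1(Y_p)\bigr)$ coming from the branched cover $Y_p$ of $B^4$ along the slice disk, extension of the character over $Y_p$, and a twisted duality/Reidemeister-torsion argument over the slice-disk exterior yielding the norm factorization. So your overall strategy is the right one; the problem is in the execution of two of its steps.

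The concrete gap is the character-extension step. A character $\chi\co H_1(B_p)\to \zz_q$ vanishing on $A$ induces a homomorphism on the image of $H_1(B_p)$ in $H_1(Y_p)$, but $\zz_q$ is not an injective abelian group: the extension one gets (via $\qq/\zz$) in general takes values in $\zz_{q^r}$ for some $r\ge 1$, not in $\zz_q$. For instance, if an element of the image has nonzero $\chi$-value but is divisible by $q$ in $H_1(Y_p)$, no $\zz_q$-valued extension exists; this is precisely why Casson and Gordon work with characters of prime-power order, and ``$q$ odd and distinct from $p$'' does not repair it. As a consequence, the representation that extends over $W$ is built from a $\zz_{q^r}$-valued character, and your duality argument can at best produce a factorization $\lambda t^k f(t)\overline{f(t)}(t-1)^e$ with $f\in\qq[\zeta_{q^r}]\var$; to reach the statement as claimed one must still descend from $\qq[\zeta_{q^r}]$ to $\qq[\zeta_q]$. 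That descent is where $q\ne 2$ genuinely enters (the extension $\qq[\zeta_{q^r}]/\qq[\zeta_q]$ has odd degree $q^{r-1}$, so each symmetric irreducible factor over $\qq[\zeta_q]$ has a symmetric irreducible factor upstairs and even multiplicities descend); in your sketch $q\ne2$ is never used except as the unsupported justification for the nonexistent $\zz_q$-valued lift. Separately, the final paragraph is where essentially all of the technical content of \cite{kl1} lives --- the identification of $\Delta$ with Reidemeister torsion (or an equivalent careful long-exact-sequence/order bookkeeping with Poincar\'e--Lefschetz duality), and the accounting of the $(t-1)^e$ factor via the $H_0$-- and $H_2$--type terms --- and as written (``multiplicativity of orders \dots forces the order \dots to be of the form $\lambda t^k f\overline{f}$ times the order of the image'') it is a gesture rather than an argument; in particular the proposed explanation of $(t-1)^e$ by ``a diagonal entry degenerating to $1$ after setting $\zeta_q=1$'' is not correct as stated.
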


\noindent{\bf Remark.} In that article, Theorem~\ref{slicethm} is stated  assuming $p$ and $q$ are odd, but the restriction to $p$ odd is unnecessary.

Assume $A\subset H_1 (B_p)$ is an invariant metabolizer and $\chi$ vanishes on $A$.  Then the induced map $\bar{\chi}  \co  H_1(B_p; \zz_q) \to \zz_q$ vanishes on the image $\bar{A} \subset H_1 (B_p; \zz_q)$ of $A$.   Notice that  $\bar{A}$ is a $\zz_p$--invariant subgroup of $H_1(B_p; \zz_q)$    (a $\ff_q[\zz_p]$--submodule of $H_1(B_p; \zz_q)$).  In any specific example we can enumerate the equivariant metabolizers of $H_1(B_p)$ and their images in $H_1(B_p;\zz_q)$, but the following lemma will permit us to bypass some of that work.

\begin{lemma}\label{proper}  Let $G$ be a finite abelian $q$--group with a nonsingular linking form $\lk \co G \times G \to \qq / \zz$.    Let $H$ be a metabolizer for $(G, \beta)$.  Then the inclusion
 $H\otimes \zz_q \to G \otimes \zz_q$ is not surjective. 

\end{lemma}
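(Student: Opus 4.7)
The plan is to translate surjectivity into a statement about subgroups of $G$ and then exploit nonsingularity of the linking form via the perpendicular operation. Since the map $H\otimes\zz_q \to G\otimes\zz_q = G/qG$ is induced by the inclusion $H \hookrightarrow G$, its image is $(H+qG)/qG$. Hence surjectivity is equivalent to the equality $H + qG = G$, and I will show this fails (assuming $G$ is nontrivial, so that $H$ is nontrivial as well since $|H|^2 = |G|$).

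To rule out $H + qG = G$, I will compute $(H+qG)^\perp$. Using the standard identities $(A+B)^\perp = A^\perp \cap B^\perp$ and $H^\perp = H$ (because $H$ is a metabolizer), this reduces to $(H+qG)^\perp = H \cap (qG)^\perp$. The key subclaim is that $(qG)^\perp = G[q]$, the $q$-torsion subgroup of $G$: since $\lk$ is $\zz$-bilinear, $\lk(g,qh)=\lk(qg,h)$, so $g \in (qG)^\perp$ iff $\lk(qg,h) = 0$ for all $h \in G$, which by nonsingularity of $\lk$ is equivalent to $qg = 0$.

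Finally, since $\lk$ is nonsingular on a finite abelian group, $|A^\perp| = |G|/|A|$ for every subgroup $A \subseteq G$. Applying this with $A = H + qG$ yields
\[
|H + qG| \;=\; \frac{|G|}{|H\cap G[q]|}.
\]
As $H$ is a nontrivial finite abelian $q$-group, it has nonzero $q$-torsion, and $H[q] \subseteq H \cap G[q]$, so $|H\cap G[q]| > 1$. Therefore $|H+qG| < |G|$, which gives $H + qG \neq G$ and hence nonsurjectivity of $H \otimes \zz_q \to G \otimes \zz_q$.

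There is no serious obstacle here; the only point requiring a moment's care is the identity $(qG)^\perp = G[q]$, which rests squarely on nonsingularity, and the observation that any nontrivial finite $q$-group has $q$-torsion. Everything else is a straightforward application of the standard order formula for perpendicular subgroups under a nonsingular linking pairing.
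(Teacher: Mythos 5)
Your proof is correct, but it takes a genuinely different route from the paper's. The paper's argument is a two-line exercise in homological algebra that never touches the linking form beyond the order count: from the exact sequence $H \to G \to G/H \to 0$ and right-exactness of $-\otimes \zz_q$, the cokernel of $H\otimes\zz_q \to G\otimes\zz_q$ is $(G/H)\otimes\zz_q$, which is nonzero because $G/H$ is a nontrivial finite $q$--group (its order is $\sqrt{|G|}$). In other words, the paper only uses that $H$ is a \emph{proper} subgroup of a $q$--group. You instead exploit nonsingularity in earnest: rewriting surjectivity as $H+qG=G$, then computing $(H+qG)^\perp = H\cap (qG)^\perp = H\cap G[q]$ via $H^\perp=H$ and the identity $(qG)^\perp=G[q]$, and finally invoking the order formula $|A^\perp|=|G|/|A|$ (which rests on the adjoint isomorphism $G\cong\Hom(G,\qq/\zz)$) to get $[G:H+qG]=|H\cap G[q]|\ge q$. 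All of these steps check out, and your version buys a quantitative refinement — the cokernel has order $|H\cap G[q]|$, at least $q$ — whereas the paper's proof only asserts nontriviality; the price is duality machinery that the statement doesn't actually require. Both arguments share the same implicit convention that $G$ (hence $H$) is nontrivial, which you were right to flag.
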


 \begin{proof}   
 
 We have the exact sequence of abelian groups:  $H \to G \to G/H \to 0$.  Tensoring is right exact, so we have $H\otimes \zz_q \to G \otimes \zz_q \to (G/H) \otimes \zz_q \to 0$ is exact.  The last group is nontrivial, since $G/H$ is a nontrivial $q$--group.
 
 \end{proof}

 Combining these two results we obtain the following.
 
 \begin{corollary}\label{slicethmcor} If $K$ is slice and $p, q$ are distinct  primes with $p\ne 2$, then  there exists a proper  invariant subspace $\bar{A}\subset H_1(B_p ; \zz_q)$ so that for any   $\chi\in H^1(B_p ; \zz_q)$ which vanishes on $\bar{A}$, the corresponding twisted polynomial $\Delta_{X_p,   \qq[\zeta_q]\var} (t)$  factors as $\lambda t^k f(t)\overline{f(t)}(t-1)^e$ for some $\lambda\in \qq[\zeta_q], k\in \zz, $ and $f(t)\in \qq[\zeta_q]\var$. Here $e=1$ if $\chi$ is nonzero, and $e=0$ if $\chi$ is zero. The subspace $\bar{A}$ is the reduction modulo $q$ of a metabolizer   for the linking form $\lk$. \qed
 \end{corollary}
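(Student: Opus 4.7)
My plan is to derive the corollary by combining Theorem \ref{slicethm} with Lemma \ref{proper}, using the primary decomposition of $H_1(B_p)$ to isolate the $q$-primary piece where the lemma applies.

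First I would invoke Theorem \ref{slicethm}, which under the given hypotheses produces an invariant metabolizer $A \subset H_1(B_p)$ for the linking form $\lk$ such that every $\zz_q$-character on $H_1(B_p)$ vanishing on $A$ yields a twisted polynomial $\Delta_{X_p,\qq[\zeta_q]\var}(t)$ of the claimed form. I would then define $\bar{A}$ to be the image of $A$ under the reduction map $H_1(B_p) \to H_1(B_p;\zz_q)$, and verify three things: (i) $\bar{A}$ is a $\zz_p$-invariant subspace, (ii) characters $\chi \in H^1(B_p;\zz_q)$ vanishing on $\bar{A}$ correspond bijectively to $\zz_q$-characters on $H_1(B_p)$ vanishing on $A$, and (iii) $\bar{A}$ is proper.

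Invariance of $\bar{A}$ is immediate from the naturality of reduction mod $q$ together with the invariance of $A$. The identification of characters follows from the bijection, already recorded in the discussion preceding Theorem \ref{slicethm}, between $\zz_q$-characters on $H_1(B_p)$ and elements of $H^1(B_p;\zz_q) \cong \Hom(H_1(B_p;\zz_q),\zz_q)$; under this bijection, vanishing on $\bar{A}$ corresponds exactly to vanishing on $A$, so the factorization of $\Delta_{X_p,\qq[\zeta_q]\var}(t)$ is inherited directly from Theorem \ref{slicethm}.

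The main technical step, and the only point at which Lemma \ref{proper} genuinely enters, is the properness of $\bar{A}$. For this I would use the primary decomposition $H_1(B_p) = \bigoplus_r H_1(B_p)_{(r)}$ together with the associated splitting $A = \bigoplus_r A_{(r)}$; as explained in the text, $A_{(q)}$ is a metabolizer for the nondegenerate linking form restricted to the finite abelian $q$-group $H_1(B_p)_{(q)}$. Tensoring with $\zz_q$ kills every primary component except the $q$-primary one, so $\bar{A}$ coincides with the image of $A_{(q)} \otimes \zz_q \to H_1(B_p)_{(q)} \otimes \zz_q \cong H_1(B_p;\zz_q)$. Applying Lemma \ref{proper} with $G = H_1(B_p)_{(q)}$ and $H = A_{(q)}$ forces this image to be a proper subspace, which finishes the argument.
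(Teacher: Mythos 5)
Your proposal is correct and follows the paper's argument exactly: the paper obtains the corollary by combining Theorem \ref{slicethm} with Lemma \ref{proper}, using the primary decomposition of $H_1(B_p)$ and the identification of $\zz_q$-characters with $H^1(B_p;\zz_q)$ described just before the theorem. Your write-up simply makes explicit the same steps (invariance of $\bar{A}$, the character correspondence, and properness via Lemma \ref{proper} applied to the $q$-primary metabolizer $A_{(q)}$) that the paper leaves to the reader.
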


\begin{definition} We   call a Laurent polynomial $d(t)\in\qq[\zeta_q]\var$ a {\em norm} provided $d(t)$ factors
in the form $$d(t)=\lambda t^kf(t)\overline{f(t)}.$$
\end{definition}

 Note that by multiplying by appropriate powers of $t$ one may assume that $d(t)$ and $ f(t)$ are   polynomials with nonzero constant terms, and that $k=\deg(d)/2=\deg(f)$.
 
\medskip
  
Let $K$ be an algebraically slice knot, and fix a prime number $p$.  In order to use twisted Alexander polynomials associated to the $p$--fold cover $X_p$ to show that $K$ is not slice, we must find a prime $q$ and show that, for every image $\bar A\subset H_1(B_p; \zz_q)$ of an invariant metabolizer, there is a nontrivial $\chi \co H_1(B_p; \zz_q) \to \zz_q$ which vanishes on $\bar A$ for which $\Delta_{X_p,   \qq[\zeta_q]\var} (t)/(1-t)$ is not a norm.  

\begin{definition} We call the quotient $\Delta_{X_p,   \qq[\zeta_q]\var} (t)/(1-t)^e$ the {\em  reduced twisted Alexander polynomial}   and  denote it by  $ \tD_{X_p,    \qq[\zeta_q]\var} (t)$ or $\tD_{X,    (\qq[\zeta_q]\var)^p} (t)$. Here $e=1$ if $\chi$ is nonzero, and $e=0$ if $\chi$ is zero. 
\end{definition}

Determining whether a polynomial $d(t)\in\qq[\zeta_q]\var$ is  not a norm can be a challenge in general.  However, the following observation and 
 number theoretic lemma  provide two tools which are sufficient  to deal with all the examples we calculate below. 
 
 First, if $d(t)$ is a norm, then its image in $\cc[t]$ (mapping $\zeta_q$ to $e^{2\pi i/q}$)
 factors similarly. Since $\overline{(t-z)}=(t^{-1}-\bar{z})=-\bar{z}t^{-1}(t-1/\bar{z})$ it follows that 
 the complex roots of $d(t)$ come in pairs of the form $z$ and $1/\bar{z}$.
 
 A more sophisticated  method is the following. All of the  polynomials $d(t)$ we calculate have coefficients in the subring $\zz[\zeta_q]\subset \qq[\zeta_q]$. Although $\zz[\zeta_q]$ is not a unique factorization domain for most primes $q$, we can nevertheless apply the following version  of Gauss's lemma.
 
 \begin{lemma}\label{gausslem} Let $q, r$ be primes  and suppose $r=nq +1$ for some positive integer $n$. Choose $b  \in \zz_r$  so that $b\ne 1$ and $b^q=1$, and let $\phi\co\zz[\zeta_q]\to \zz_r$ be the ring homomorphism sending $1$ to $1$ and $\zeta_q$ to $b$. 
 
 Let
  $d(t)\in \zz[\zeta_q][t]$ be a   polynomial of degree $2k$.  Assume its image $\phi(d(t))\in   \zz_r [t]$ also has degree $2k$.  
  If $d(t)$ is a norm (over $ \qq[\zeta_q]$)   then $\phi(d(t))\in  \zz_r [t]$ factors as the product of two polynomials of degree $k$.
 \end{lemma}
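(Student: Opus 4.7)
The plan is to pass to a local setting where Gauss's lemma applies cleanly, and then reduce modulo the relevant prime. Starting from a norm factorization $d(t)=\lambda t^k f(t)\overline{f(t)}$, after the standard normalization preceding the lemma, we may take $f(t)\in\qq[\zeta_q][t]$ of degree exactly $k$ with nonzero constant term. Setting $g(t)=t^k\overline{f(t)}$, a direct computation shows that $g(t)\in\qq[\zeta_q][t]$ is also a polynomial of degree $k$ with nonzero constant term, and the identity $d(t)=\lambda f(t)g(t)$ holds in $\qq[\zeta_q][t]$.

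Next, let $R=\zz[\zeta_q]$ and $\mathfrak{p}=\ker(\phi)\subset R$. Because $R/\mathfrak{p}\cong\zz_r$ is a field, $\mathfrak{p}$ is maximal; since $R$ is the ring of integers of the cyclotomic field $\qq(\zeta_q)$ and therefore Dedekind, the localization $R_\mathfrak{p}$ is a discrete valuation ring. Let $\pi$ be a uniformizer and $v=v_\mathfrak{p}$ the associated valuation, extended to polynomials by $v\bigl(\sum h_i t^i\bigr)=\min_i v(h_i)$; call a polynomial primitive (at $\mathfrak{p}$) when this minimum is zero. The standard multiplicativity $v(h_1h_2)=v(h_1)+v(h_2)$ holds in a DVR because the residue ring $(R_\mathfrak{p}/\mathfrak{p} R_\mathfrak{p})[t]=\zz_r[t]$ is an integral domain.

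Now rescale: set $f'(t)=\pi^{-v(f)}f(t)$ and $g'(t)=\pi^{-v(g)}g(t)$. These lie in $R_\mathfrak{p}[t]$, are primitive, and each has degree $k$, and the identity becomes $d(t)=\lambda' f'(t)g'(t)$ for some $\lambda'\in\qq(\zeta_q)$. Applying $v$ gives $v(\lambda')=v(d)-v(f')-v(g')=v(d)\ge 0$, since the coefficients of $d$ lie in $R\subset R_\mathfrak{p}$; hence $\lambda'\in R_\mathfrak{p}$. Reducing modulo $\mathfrak{p}$ via the map $R_\mathfrak{p}[t]\to\zz_r[t]$, which agrees with $\phi$ on $R[t]$, we obtain $\phi(d(t))=\overline{\lambda'}\,\overline{f'(t)}\,\overline{g'(t)}$ in $\zz_r[t]$. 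Since $\phi(d(t))$ has degree $2k$ by hypothesis, $\overline{\lambda'}$ must be a nonzero constant and both $\overline{f'}$ and $\overline{g'}$ must attain their maximum possible degree $k$, yielding the desired factorization of $\phi(d(t))$ as a product of two degree-$k$ polynomials in $\zz_r[t]$.

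The main obstacle is that $R=\zz[\zeta_q]$ is generally not a PID, so a content-type factorization cannot be performed in $R[t]$ directly; localizing at $\mathfrak{p}$ sidesteps this by producing a DVR in which the usual Gauss-type argument works. A secondary point is ensuring that degrees survive the reduction, and this is precisely where the hypothesis $\deg\phi(d(t))=2k$ is used: it rules out the possibility that either factor collapses in degree after passing to $\zz_r[t]$.
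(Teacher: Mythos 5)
Your proof is correct and follows essentially the same route as the paper: localize $\zz[\zeta_q]$ at $\ker\phi$ to obtain a discrete valuation ring, run a Gauss's-lemma argument there, and use the hypothesis $\deg\phi(d(t))=2k$ to ensure neither factor drops degree upon reduction to $\zz_r[t]$. The only difference is cosmetic: the paper cites Gauss's lemma (noting the leading coefficient of $d$ is a unit in the localization), whereas you unwind the same content/valuation argument explicitly.
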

 \begin{proof}  Let $\kappa=\ker \phi$. This is a maximal ideal (since the quotient is a field)  in the Dedekind domain $\zz[\zeta_q]$. The localization $\zz[\zeta_q]_{ \kappa }$ is therefore a discrete valuation ring and hence a unique factorization domain (\cite{dummit}).  The homomorphism $\phi$ extends to  $\zz[\zeta_q]_{ \kappa }$, since localizing inverts elements in the complement of $\kappa$, which are sent by $\phi$ to units in $\zz_r$.
 
Since $\phi(d(t))$ has degree $2k$, the leading coefficient of $d(t)$ does not lie in $\kappa$, and hence is a unit in $\zz[\zeta_q]_{ \kappa }$.  Gauss's lemma then implies that if $d(t)$ is a norm in $\qq[\zeta_q]$, it is the product of two degree $k$ polynomials in $\zz[\zeta_q]_{ \kappa }[t]$. Its image $\phi(d(t))$ is then a product of two polynomials, necessarily of degree $k$.  
 \end{proof}


\section{Algorithm to compute twisted polynomials from a Wirtinger presentation}\label{algo}

In our earlier article~\cite{kl1} we computed the twisted polynomials corresponding to $\rho$ by working with a CW--complex homotopy equivalent to the cover $X_p$, using the Reidemeister-Schreier process to find a CW--complex for  $X_p$ in terms of one for $X$, or, what amounts to the same thing, a presentation of $\pi_p$. This becomes unwieldy for a  knot  whose  group has a large presentation, since the number of 1--cells and 2--cells is roughly multiplied by $p$ in a $p$--fold cover.  Computing  downstairs, 
that is,  using the representation $\tilde\rho$ instead of $\rho$,  streamlines the computation and can be easily implemented using a computer algebra package such as MAPLE.

 The following  discussion explains how to compute twisted polynomials which arise in Theorem~\ref{slicethm}.  It applies to general knots in $S^3$, described in terms of a knot projection and the associated Wirtinger presentation of the knot group.

Recall that the Wirtinger presentation of $\pi$  has meridian generators $x_i, i=1,2,\cdots ,n,$ where $n$ is the number of strands in a projection of the knot.  The knot group is generated by these; in fact, up to homotopy equivalence,  
the knot complement has a CW--structure with the base point   the only 0--cell and the 1--cells  precisely the Wirtinger meridians. Moreover, there is one 2--cell for each crossing in the projection, attached using the Wirtinger relation $  x_i ^{-1}x_kx_jx_k^{-1}$. The  surjection $\epsilon\co\pi_1(X)\to \zz$  takes every meridian $x_i$ to $1$. 
Using this CW--structure,   one can compute the differentials $\partial_1$ and $\partial_2$, as follows.  

Since $X$ has only one 0--cell, the differential $\partial_1 \co C_1(\tilde{X})\to C_0(\tilde X)$  is given by the column vector with entries $x_i-1$.

The {\em Fox matrix of free partial derivatives} is an $n\times n$ matrix with coefficients in the group ring $\zz[\pi]$ which  represents the differential 
$\partial_2 \co C_2(\tilde{X})\to C_1(\tilde{X})$.   
Explicitly, the Wirtinger relation $x_i=x_jx_kx_j^{-1}$ contributes a row to the Fox matrix with $-1$ in the $i$th column, $1-x_i$ in the $j$th column, and $x_j$ in the $k$th column.
The {\em reduced Fox matrix} is defined to be the $(n-1)\times (n-1)$ matrix obtained by dropping the last  row and column.   
 
 The basic relation between the Wirtinger presentation,  its reduced Fox matrix $F$, and homology is the following. If  $R$ is a principal ideal domain and  $r\co \pi\to GL_p(R)$ is a representation of $\pi$ which sends the last meridian $x_n$ to a matrix $M$, let $r\co \zz[\pi]\to gl_p(R)$ denote the natural extension to the group ring and call $r(F)$ the {\em substituted reduced Fox matrix} for the Wirtinger presentation and representation $r$.    Then it is well-known (and proved in the present context in~\cite{kl1}) that if the determinant of $r(F)$ is nonzero, then the order of the torsion of $H_1(X;R^n)$ is equal to the determinant of $r(F)$ times a factor which depends only on $M$ and $H_0(X;R^p)$. We refer the reader to~\cite{kl1} for details, but note that computing $H_0(X;R^n)$ is a simple task. 
 
  In our context we take $r$ to be $\tilde{\rho}$ and conclude 
  $$\Delta_{X, \qq[\zeta_q]\var} = \frac{\det (r(F))}{ \det( \tilde{\rho}(x_n) - I)}(1-t)^s,    $$ where $s = 1$ if $\chi$ is trivial and $s=0$ otherwise.

 \medskip
 
Of course, one can use other presentations of the knot group  rather than the Wirtinger presentation. Certain classes of knots have more convenient presentations of their knot group, for example torus knots or pretzel knots. But with other presentations of $\pi$, more care needs to be taken, and we refer the reader to the article~\cite{kl1} where these issues are explained.

\medskip

One last observation about calculations is in order. If $\chi\co V\to\zz_q$ and $\chi'\co V\to \zz_q$ are nonzero multiples of each other, say $\chi'=n\chi$, then there is a Galois automorphism $\alpha\co \qq[\zeta_q]\to\qq[\zeta_q]$ so that
$\zeta_q^{\chi(v)}=\alpha(\zeta_q^{\chi'(v)})$ for all $v\in V$, namely $\alpha(\zeta_q^i)=\zeta_q^{ni}$.
In particular the associated twisted Alexander polynomials are Galois conjugates of one another.

As a consequence, if $V$ is 1--dimensional, the twisted polynomials associated to nontrivial representations that factor through $V$ are all Galois conjugates.  Notice that a Galois conjugate of   $d(t)\in\qq[\zeta_q]\var$ is a norm if and only if $d(t)$ itself is a norm. Notice further that the equivariant automorphisms of $V$ are given by multiplication by a nonzero scalar, and so there is a unique 
twisted polynomial (up to Galois automorphisms) associated to any  1--dimensional invariant subspace of $H_1(B_p;\zz_q)$. 

If $V \subset H_1(B_p;\zz_q)$  has dimension greater than one, the   twisted polynomials corresponding to nontrivial characters that factor through the projection to $V$  need not be Galois conjugates of one another, even if $V$ is irreducible.

\section{Examples: 12 crossing prime knots}
Among prime knots of 12 or fewer crossings, there are 175 that are algebraically slice.  See the   table of knot invariants KnotInfo~\cite{liv} for details.  Of these, 157 have been previously been shown to be topologically slice.  This was done basically by finding explicit slice disks, or else using the theorem of Freedman~\cite{fr}  which states that Alexander polynomial one knots are slice.  For 11 and 12 crossing knots, the most complete search was done by Alex Stoimenow~\cite{stoi}, with the results posted on his website.  Which of the remaining 18 knots are topologically slice has remained open for several years.    In this section we illustrate the power of Corollary~\ref{slicethmcor} by demonstrating that 16 of these remaining 18 algebraically slice knots are not slice.  We also show that one of   them, $12_{a990}$, is slice, and one, $12_{a631}$ remains a mystery.  (Of the 16, we have observed that exactly two are 2--bridge knots, and these can be shown not to be slice by a calculation based on Casson and Gordon's original work~\cite{cg1} also.) 

  The 18 knots of interest  are listed in Table~1.  Also listed are the Alexander polynomials of  these knots.  The column headed ``$p$'' indicates which cover we use to prove that the knots are not slice, and the column labeled ``$q$'' indicates what type of torsion we consider. In the column headed ``$H_1(B_p)$'' the notation $a^m b^n $ is shorthand for $(\zz_a)^m\oplus (\zz_b)^n$.

\begin{table}[h]\label{table18}
\centering
\begin{tabular}{c c c c c }
Knot & Alexander Polynomial & $p$ & $H_1(B_p)$&$q$\\
\hline
$11_{n45}$&$(2t^2-2t+1)(t^2-2t+2)$&$3$&$13^2$&$13 $\\
$11_{n145}$&$(t^3-2t^2+t+1)(t^3+t^2-2t+1)$&$3$&$13^2$&$13 $\\
$12_{a169}$&$(2t^2-3t+2)^2$&$3$&$25^2$&$5 $  \\
$12_{a596}$&$(2t^2-4t+3)(3t^2-4t+2)$&$3$&$43^2$&$43 $ \\
$12_{a631}$&$(t-2)(2t-1)(2t^2-2t+1)(t^2-2t+2)$&&&$ $\\
$12_{a990}$&$(t^2-t+1)^2(t^2-3t+1)^2$&&&$ $\\
$12_{n31}$&$(2t-1)(t-2)$&$3$&$7^2$&$7 $\\
$12_{n132}$&$(2t^2-3t+2)^2 $&$3$&$25^2$&$5 $\\
$12_{n210}$&$(t^3-t+1)(t^3-t^2+1) $&$3 $&$7^2 $&$7 $\\
$12_{n221}$&$ (t^2-t+1)^2$&$2 $&$9 $&$ 3$  \\
$12_{n224}$&$ (2t-1)(t-2)(t^2-t+1)^2$&$3 $&$2^2 14^2 $&$7 $ \\
$12_{n264}$&$(t^2-2t+2)(2t^2-2t+1) $&$3 $&$13^2 $&$ 13$\\
$12_{n536}$&$(t^3-4t^2+3t-1)(t^3-3t^2+4t-1) $&$5 $&$11^2 $&$ 11$\\
$12_{n681}$&$(t^4-t^3+t^2-t+1)^2 $&$2 $&$25$&$5$\\
$12_{n731}$&$(t^3-3t^2+5t-2)(2t^3-5t^2+3t-1) $&$3 $&$4^2  13^2$&$13 $\\
$12_{n812}$&$(t^2-t+1)^2 $&$2 $&$9$&$3 $\\
$12_{n813}$&$(2t-1)(t-2)(t^2-t+1)^2 $&$3 $&$28^2 $&$ 7$ \\
$12_{n841}$&$(2t-1)(t-2)(t^2-t+1)^2 $&$3 $&$28^2 $&$ 7$\\
\hline
\end{tabular}
\vskip.2in\caption{}
\end{table}
 
 \vskip.1in
   Given a knot $K \subset S^3$, we apply twisted polynomials by letting $X$ denote $S^3 - K$, $X_p$ its $p$--fold cyclic cover, and $B_p$ its $p$--fold cyclic branched cover.  A choice of $\ff_q[\zz_p]$--module $V$, a character $\chi \co V \to \zz_q$, and an equivariant homomorphism $\rho \co H_1(B_p; \zz_q) \to V$ determines a $(\qq[\zeta_q]\var , \zz[\pi])$--module structure on $(\qq[\zeta_q] \var)^p $, as in Section~\ref{weturn}. The resulting reduced twisted Alexander polynomial is denoted  $ \tD_{X,   (\qq[\zeta_q]\var)^p} (t)$. 
 \vskip.1in
 
 \subsection{The knot $\mathbf {12_{a990}}$ is slice.}
 In Figure~\ref{12a990} we illustrate the connected sum of $12_{a990}$ with right- and left-handed trefoils.  Since the sum of these trefoils is slice, forming the connected sum does not change  the concordance class.  If the two band moves are made along the indicated arcs, the resulting three component link is an unlink.  Thus, the connected sum is slice, as desired. This construction was inspired by a similar one developed by Tamulis~\cite{tam}.

 \subsection{The knot $\mathbf {12_{a169}}$}  We will show that the 12 crossing alternating knot $K=12_{a169}$  is not slice.  This example exhibits all the phenomena discussed in the previous sections, and in particular $12_{a169}$ is determined not to be slice by the calculation of a single twisted Alexander polynomial.

  \begin{figure}\label{12a990}
  \fig{.5}{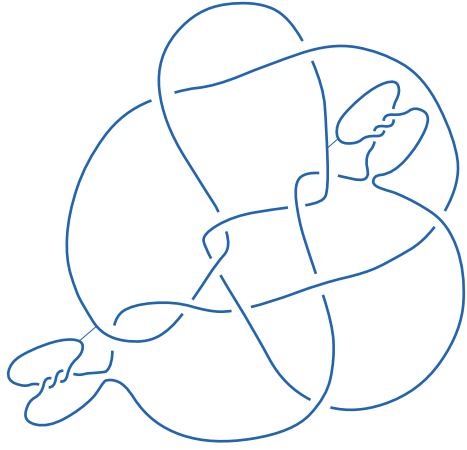}
  \caption{The knot $12_{a990} \# T_{2,3} \# T_{2,-3}$}
  \end{figure}

Let  $B_3 $ be the $3$--fold branched cover of $S^3$ branched over $K = 12_{a169}$.
Then a standard calculation using   the Seifert form (see~\cite{rolf}) shows that $H_1(B_3)=\zz_{25}\oplus \zz_{25}$.   We take $q=5$.  Then $H_1(B_3;\zz_5)=\zz_5\oplus\zz_5$, and, since  $p$ does not divide $q-1$, Proposition~\ref{prop4.4}   implies that, as an $\ff_5[\zz_{3}]$--module, 
$H_1(B_3;\zz_5)$ must be isomorphic to the the irreducible module $  R_{1+x+x^2}.$

We have the canonical homomorphism $ \rho \co \pi_3 \to H_1(B_3;\zz_5)$  and   its extension $\tilde\rho\co \pi \to \zz \ltimes H_1(B_3;\zz_5)$. 
If $A\subset H_1(B_3)$ is an invariant metabolizer, then its image $\bar A\subset H_1(B_3;\zz_5)$ is  a proper invariant subspace by  Lemma~\ref{proper}, and hence $\bar A=0$  since $H_1(B_3;\zz_5)$ is irreducible.
 
Thus every $\chi\in \Hom(H_1(B_3),\zz_5)$ vanishes on $A$ and so  to prove $K$ is not slice, it suffices to find a single $\chi$ so that   so that  the corresponding twisted polynomial
$\tD_{X,   (\qq[\zeta_q]\var)^p} (t)$ is not a norm.   The twisted polynomial $\Delta_{X,  (\qq[\zeta_q]\var)^p}(t)$ is computed using the method described in Section \ref{algo}.

 The knot group has  Wirtinger presentation with generators $x_1,x_2,\cdots, x_{12}$ and relations:
 
 $$x_1=x_8^{-1} x_2 x_8, x_2=x_7^{-1}x_3x_7, x_3=x_6^{-1}x_4x_6, x_4=x_{11}^{-1}x_5x_{11},$$
  $$x_5=x_4^{-1} x_6 x_4, x_6=x_3^{-1}x_7x_3, x_7=x_2^{-1}x_8x_2, x_8=x_{1}^{-1}x_9x_{1},$$
   $$x_9=x_{12} x_{10} x_{12}^{-1}, x_{10}=x_{9} x_{11} x_{9}^{-1}, x_{11}=x_5^{-1}x_{12}x_5, x_{12}=x_{10}x_{11}x_{10}^{-1}.$$

 One checks that the assignment $x_i\mapsto x v_i$ where
\begin{equation*}\label{eq6.3}
\begin{split}  v_1=4+2x, v_2=2+x, & v_3=0,v_4=3+4x, v_5=2+3x, v_6=4+4x, 
\\
v_7=1,v_8=3+x, & v_9=2x, v_{10}=2,  v_{11}=1, v_{12}=0  \end{split} \end{equation*}
solves the linear system given by (\ref{repeqs}), yielding   $\tilde{\rho}\co\pi\to \zz \ltimes {R_{1+x+x^2}}$.    

For $\chi\co R_{1+x+x^2}\to \zz_5$ we take the homomorphism determined by 
$$\chi(1)=1, \chi(x)=0.$$
Then the corresponding right  $\pi$--action on $ (\qq[\zeta_5]))^3$    is computed using Theorem~\ref{bigrep}. For example, the meridian $x_1$ is sent to $(x, 4+2x)\in\zz\ltimes R_{1+x+x^2}$, and so using Equation (\ref{bigrep1}) one computes that $x_1$ acts by the $3\times 3 $ matrix
\begin{equation*}
\begin{pmatrix} 0&\zeta_5^{-2}&0\\ 0&0&\zeta_5^{-2}\\ \zeta_5^4 t&0&0\end{pmatrix}
 \end{equation*}
 We have used the fact that $x^2=-1-x$ in $R_{1+x+x^2}$.

 In this way we obtain a homomorphism $\pi\to GL_3(\qq[\zeta_5]\var)$  which we apply to the entries in the reduced Fox matrix. The determinant of the resulting $33\times 33$ matrix (this is a sparse matrix: only 132 entries are nonzero, and all nonzero entries have the form $\pm1$ or $\pm t^k\zeta_5^r$) equals
 $$-t^3(4t^2+t\zeta_5^2+t\zeta_5^3+5t+4)(t-1)^2.$$
 Dividing by $(t-1) $ yields   
 $$ \Delta_{X,  (\qq[\zeta_q]\var)^p}(t) = -t^3(t-1)\big(4t^2+(\zeta_5^3+\zeta_5^2+5)t+4\big)$$
 and so (up to units in $\qq[\zeta_5]\var$), 
 $$\tD_{X,  (\qq[\zeta_q]\var)^p}(t) = 4t^2+(\zeta_5^3+\zeta_5^2+5)t+4  .$$
 
 This polynomial is not a norm. Indeed, the map $\zz[\zeta_5]\to \zz_{41}$ taking $\zeta_5$ to $10$  ($10^5=1$ in $\zz_{41}$) maps this polynomial to the irreducible polynomial $37 t^2 + 2t + 37$ in $  \zz_{41} [t]$.  Applying Lemma~\ref{gausslem} shows that $4t^2+(\zeta_5^3+\zeta_5^2+5)t+4 $ is not a norm.  It then follows from Corollary \ref{slicethmcor}
that  $12_{a169}$ is not slice.

 \subsection{The knot $\mathbf {12_{n132}}$} This knot has the same Alexander polynomial and homology of the 3--fold branched cover as the knot $12_{a169}$ which was treated in the previous subsection.  Thus we argue in precisely the same way as we did before, solving  the linear system (\ref{repeqs}) for  $v_i\in R_{1+x+x^2}$     and computing the determinant of the corresponding substituted reduced Fox matrix.  This time the calculation yields
\begin{equation*}
\begin{split}
 \tD_{X,  (\qq[\zeta_q]\var)^p}(t)  = \  & ( t-1  )   \big( 5\,{t}^{3}+ \left( -12\,{\zeta_5}^{4}-2\,{\zeta_5}^{3}
+2\,{\zeta_5}^{2}+2\,\zeta_5 \right) {t}^{2}\\ 
&+  ( 2\,{\zeta_5}^{4}+2\,{\zeta_5}^{3}-2\,{\zeta_5}^
{2}-12\,\zeta_5  ) t+5  \big). 
 \end{split}
 \end{equation*}
This polynomial is not a norm by Lemma~\ref{gausslem}, since mapping $\zz[\zeta_5]$ to $\zz_{31}$ by sending $\zeta_5$ to $2$ yields 
 $(t+30) (5 t^3+21 t^2+16 t+5) $, and the cubic term is irreducible.  Hence $12_{n132}$ is not a slice knot.

 \subsection{The knot  $\mathbf {12_{n813}}$} 
  The knot $K=12_{n813}$  has Alexander polynomial  $(2t-1)(t-2)(t^2-t+1)^2$  and the homology of the 3--fold branched cover of $K$ is $\zz_{28}\oplus \zz_{28}$. 
  
  We take $p=3$ and $q=7$. With this choice $x^p-1$ factors over $\ff_7[\zz_{3}]$  as 
 $(x-1)(x+3)(x+5)$, and hence $H_1(B_3;\zz_{7})$ splits as a $\ff_7[\zz_{3}]$--module:
 $$H_1(B_3;\zz_7)\cong R_{x+3}\oplus R_{x+5}.$$
 (nondegeneracy of the linking form requires both possible primary components to be nonzero, and hence $H_1(B_3;\zz_7)$ cannot be isomorphic to $R_{x+3}\oplus R_{x+3}$.)   Fix an isomorphism $H_1(B_3;\zz_7)\cong R_{x+3}\oplus R_{x+5}$
 
 If  $A\subset H_1(B_3)$ is an invariant metabolizer, its image   $\bar A\subset H_1(B_3;\zz_7)$ must either be   $R_{x+3}$  or $R_{x+5}$, since it is invariant and must have order 7.  If $\bar A$ equals $R_{x+3}$, then any   equivariant $\rho_5\co \pi_3\to R_{x+5}$ vanishes on $\bar A$ by Lemma~\ref{schur}.  Similarly if $\bar A$ equals $R_{x+5}$,  any   equivariant $\rho_3\co \pi_3\to R_{x+3}$ vanishes on $A$. 
 
 We construct $\rho_3$ and its extension $\tilde{\rho}_3 \co \pi \to \zz \ltimes R_{x+3}$ by solving the 
  linear system  (\ref{repeqs}).  Since $R_{x+3}$ is generated as a $\ff_7$--vector space by $1$, we can take $\chi_3\co R_{x+3}\to\zz_7$ defined by $\chi_3(1)=1$. 
 
Using the algorithm described above a calculation yields
\begin{equation*}\begin{split}\tD_{X, \tilde{\rho}_3}=  (t+1) \big(-{t}^{3}+ \left( -3\,{\zeta_7}^{4}-3\,\zeta_7-3\,{\zeta_7}^{2}-5\,{\zeta_7}^{3}-5\,{\zeta_7}^{5}-5
\,{\zeta_7}^{6} \right) {t}^{2}\\ 
 + \left( -5\,\zeta_7-5\,{\zeta_7}^{2}-3\,{\zeta_7}^{3}-5\,{\zeta_7}^{
4}-3\,{\zeta_7}^{5}-3\,{\zeta_7}^{6} \right) t-1\big).
 \end{split}\end{equation*}
Similarly one finds $\rho_5\co \pi_3\to R_{x+5}$ and $\chi_5\co R_{x+5}\to\zz_7$. The resulting polynomial is
 \begin{equation*}\begin{split}\tD_{X, \tilde{\rho}_5}=
 (t+1)\big( {t}^{3}+ \left( 5\,\zeta_7+5\,{\zeta_7}^{4}+5\,{\zeta_7}^{2}+3\,{\zeta_7}^{3}+3\,{\zeta_7}^{5}+3\,
{\zeta_7}^{6} \right) {t}^{2}\\ 
+ \left( 3\,\zeta_7+3\,{\zeta_7}^{2}+5\,{\zeta_7}^{3}+3\,{\zeta_7}^{4}+
5\,{\zeta_7}^{5}+5\,{\zeta_7}^{6} \right) t+1 \big). \end{split}\end{equation*}
 Neither of these are norms.  One way to see this is to note that
 $\overline{t+1}=t^{-1}+1=t^{-1}(1+t)$, and $-1$ is not a root of the cubic factor.  Alternatively, map to $\zz_{43}$  sending $\zeta_7$ to $4$; the result does not factor into a product of quadratics.  Hence $12_{n813}$ is not slice.

\subsection{The knot $\mathbf {12_{n841}}$} 
The    Alexander polynomial of $K= 12_{n841}$ is the same as that of $12_{n813}$, and the homology of its 3--fold branched cover is  also $\zz_{28}\oplus   \zz_{28}$, and so $H_1(B_3;\zz_7)=R_{x+3}\oplus R_{x+5}$.  We compute  in exactly the same way as for $12_{n813}$.  This time the results are
\begin{equation*}\begin{split}\tD_{X, \tilde{\rho}_3}=  
(t+1)\big(1+ \left( 5\,\zeta_7+5\,{\zeta_7}^{4}+5\,{\zeta_7}^{2}+3\,{\zeta_7}^{3}+3\,{\zeta_7}^{5}+3\,{\zeta_7}^{6}
 \right) t\\ 
 + \left( 3\,\zeta_7+3\,{\zeta_7}^{2}+5\,{\zeta_7}^{3}+3\,{\zeta_7}^{4}+5\,{\zeta_7}^{5}+5
\,{\zeta_7}^{6} \right) {t}^{2}+{t}^{3}
\big) 
\end{split}\end{equation*} and  
  \begin{equation*}\begin{split}\tD_{X, \tilde{\rho}_5}=
 (t+1)\big({t}^{3}+ \left( 5\,{\zeta_7}^{2}+5\,\zeta_7+5\,{\zeta_7}^{4}+3\,{\zeta_7}^{3}+3\,{\zeta_7}^{5}+3\,
{\zeta_7}^{6} \right) {t}^{2}\\ 
+ \left( 3\,\zeta_7+3\,{\zeta_7}^{2}+5\,{\zeta_7}^{3}+3\,{\zeta_7}^{4}+
5\,{\zeta_7}^{5}+5\,{\zeta_7}^{6} \right) t
+1\big). \end{split}\end{equation*}
 Neither of these are norm,  as one can see by mapping to $\zz_{43}$ taking $\zeta_7$ to $4$. Hence $12_{n841}$ is not slice.

\subsection{The knot $\mathbf {12_{n224}}$}
The Alexander polynomial of $12_{n224}$ is also the same as that of $12_{n813}$. The homology of its 3--fold branched cover is  slightly different than the previous two: $H_1(B_3)=\zz_2\oplus \zz_2\oplus\zz_{14}\oplus \zz_{14}$, but with $\zz_7$ coefficients we again get $\zz_7\oplus \zz_7$. Arguing as above, the polynomials $\tD$ are, for $\tilde{\rho}_3$, 
\begin{equation*}\begin{split}1+ \left( 5\,{\zeta_7}^{4}+5\,{\zeta_7}^{2}+5\,\zeta_7+{\zeta_7}^{3}+{\zeta_7}^{5}+{\zeta_7}^{6} \right) t
+6\,{t}^{2}\\ 
+ \left( {\zeta_7}^{2}+\zeta_7+{\zeta_7}^{4}+5\,{\zeta_7}^{3}+5\,{\zeta_7}^{5}+5\,{\zeta_7}^{6}
 \right) {t}^{3}+{t}^{4}
 \end{split}\end{equation*}
 and, for $\tilde{\rho}_5,$
 \begin{equation*}\begin{split}1+ \left( {\zeta_7}^{4}+{\zeta_7}^{2}+\zeta_7+5\,{\zeta_7}^{3}+5\,{\zeta_7}^{5}+5\,{\zeta_7}^{6} \right) t 
+6\,{t}^{2}\\+ \left( 5\,{\zeta_7}^{4}+5\,{\zeta_7}^{2}+5\,\zeta_7+{\zeta_7}^{3}+{\zeta_7}^{5}+{\zeta_7}^{6}
 \right) {t}^{3}+{t}^{4}.
 \end{split}\end{equation*}
  These are irreducible by Lemma~\ref{gausslem} since they map to irreducible fourth degree polynomials over 
  $\zz_{29} $ by taking $\zeta_7$ to $7$.  Hence $12_{n224}$ is not slice.

 \subsection{The knots $ \mathbf {11_{n45},\,  11_{n145},\,   12_{a596},\,  12_{n31},\,   12_{n210},\,  12_{n264},\,   12_{n731}}$} These knots are treated exactly in the same way as were $12_{n813}$, $12_{n841}$, and $12_{n224}$, using the choices of $p$ and $q$ given in Table 1. In each case the homology $H_1(B_p;\zz_q)$ splits as the sum of  two 1-dimensional subspaces $H_1(B_p;\zz_q)=R_{x-a}\oplus R_{x-b}$, where $ a$ and $b$ are the two $p$th roots of $1$ in $\zz_q$. The resulting polynomials $\tD$ are not norms and so these knots are not slice.
 
 \subsection{The knot $\mathbf {12_{n536}}$} 
 For this knot we take $p=5$ and $q=11$.   The $5$th roots of unity in $\zz_{11}$ are $3,4,5$ and $9$. One can check by direct computation (the system (\ref{repeqs}) admits no solutions for $v_i$ in $R_{x-5}$ or $R_{x-9}$) or using an observation of Hartley~\cite{hart}, that only $3$ and $4$ arise, that is,  $$H_1(B_5;\zz_{11})=R_{x-3}\oplus R_{x-4}.$$
 The rest of the calculation proceeds just as in the previous examples, and   one concludes $12_{n536}$ is not slice.

\subsection{The knot  $\mathbf {12_{n681}}$}   For this  knot   we use the  2--fold cover. 

For $K=12_{n681}$, $H_1(B_2;\zz)=\zz_{25}$. Thus $H_1(B_2;\zz_5)=R_{x+1}$ and hence by Lemma~\ref{proper} every invariant metabolizer is sent to zero, i.e. $\bar A=0$. Thus to show $K$ is not slice one need only find a single nontrivial $\chi\co R_{x+1}\to \zz_5$  so that the corresponding $\tD$ is not a norm.  For one choice   the result is
 \begin{equation*}\begin{split}  (t-1)^2  \big({t}^{4}+ \left(  \zeta_5^2+ \zeta_5^3-1 \right) {t}^{3}
 + \left( -2-\zeta_5^2-\zeta_5^3 \right) {t}^{2} 
 + \left( \zeta_5^2+ \zeta_5^3-1 \right) t +1\big). 
 \end{split}\end{equation*}
Showing this is not a norm is more challenging than the other examples. In fact Lemma~\ref{gausslem} does not help: the image of the polynomial $$p(t)={t}^{4}+ \left(  \zeta_5^2+ \zeta_5^3-1 \right) {t}^{3}
 + \left( -2-\zeta_5^2-\zeta_5^3 \right) {t}^{2} 
 + \left( \zeta_5^2+ \zeta_5^3-1 \right) t +1$$ factors as a product of quadratic polynomials in $ \zz_r [t]$ for every prime $r$ with $r\equiv 1 \mod{5}$.
 
  We instead argue as follows.  Set $\zeta_q=e^{2\pi i/5}$.  Note that $p(t)$ is real.  In fact, since $\zeta_5^2+\zeta_5^3$ satisfies $x^2+x-1$, the coefficients of $p(t)$ lie   in $\zz[\tfrac{1+\sqrt{5}}{2}]$, the ring of integers in the quadratic extension $\qq[\sqrt{5}]$ of $\qq$.  
 
 The mapping $\zz[\tfrac{1+\sqrt{5}}{2}]$ to $\zz_{19}$ taking $\tfrac{1+\sqrt{5}}{2}$ to $14$ sends  $p(t)$ to the irreducible polynomial $t^4+13t^3+3t^2+13t+1$.  Therefore, $p(t)$ is irreducible over $\zz[\tfrac{1+\sqrt{5}}{2}]$.    Since $\zz[\tfrac{1+\sqrt{5}}{2}]$ is a unique factorization domain, Gauss's Lemma implies that $p(t)$ is irreducible over $\qq[\sqrt{5}]$.  
 
 Suppose that $p(t)$ is reducible over $\qq[\zeta_q]$.  
 The Galois group of the degree 2 extension $\qq[\zeta_q]$ over $\qq[\sqrt{5}]$ is $\zz_2$, generated by complex conjugation. Since $p(t)$ is irreducible over $\qq[\sqrt{5}]$, it must factor over $\qq[\zeta_q]$ into  complex conjugate factors.  This implies that any real roots of $p(t)$ have even multiplicity.  But one can easily check that $p(s)$ has exactly  two real roots and they are distinct.  This contradiction shows that $p(t)$ is not a norm over $\qq[\zeta_q]$. 
 
\subsection{The knot $\mathbf {12_{n812}}$}
Take $K=12_{n812}$ and $p=2$ (for other $p$ the homology is either trivial or the resulting polynomials is a norm). In this case $H_1(B_2;\zz)=\zz_9$. Thus $H_1(B_2;\zz_{3})=\zz_{3}=R_{x+1}$ and every metabolizer is sent to zero. One choice of $\chi$ yields the   polynomial $\tD=(t-1)^2(3t^2+5t+3)$. This is not a norm because $(t-1)^2$ is a norm, but $ 3t^2+5t+3$ is irreducible over $\qq[\zeta_3]$, as one sees by mapping to $\zz_7$ and using Lemma~\ref{gausslem}.

\subsection{The knot $\mathbf {12_{n221}}$} For $K=12_{n221}$, we take $p=2$ and $q=3$. We have $H_1(B_2;\zz)=\zz_9$, and so $H_1(B_2;\zz_{3})=\zz_{3}=R_{x+1}$, and hence the image in $ H_1(B_2;\zz_{3})$ of any invariant metabolizer  
is trivial.   For one choice of $\chi$ the corresponding $\tD$ equals $(3t^2+5t+3)(t-1)^2.$   This is the same polynomial that appeared   for the knot $12n812$, and is not  a norm in $\qq[\zeta_3]\var$. Hence $12_{n221}$ is not slice.

 \subsection{The  knot $\mathbf {12_{a631}}$} 
There remains the one algebraically slice knot of 12 crossings or less which is not known to be slice: $12_{a631}$.

\section{Pretzel Knots}
Fox~\cite{fox} asked in 1963 whether all knots are reversible and pointed to the knot $8_{17}$ as an obviously 
eversible knot. (Fox used the word {\it invertible} but we use {\it reversible} to distinguish the operation from the concordance inverse.)  Soon after, Trotter~\cite{trot} used 3-stranded pretzel knots to show that nonreversible knots exist.  It was several years until Hartley~\cite{hart} developed techniques that permitted the determination of the reversibility of all knots with low crossing number.

In~\cite{liv1} it was first shown that there are knots that are not concordant to their reverses, using Casson-Gordon invariants.  Kearton~\cite{kear} used these examples to show that mutation acts nontrivially on concordance.  These examples were built specifically so that the Casson-Gordon method could be applied.  In~\cite{naik} techniques were developed that could be used to show that some pretzel knots and their reverses are not concordant.  It was in~\cite{kl2} that  Fox's original test case, $8_{17}$, was shown not to be concordant to its reverse.  This provided the simplest example of a knot and it mutant being distinct in concordance.  In~\cite{kl2} a 4--stranded pretzel knot was shown to be distinct from a mutant in concordance:  $P(7,2,-5,3) \ne P(7,2,3,-5)$.

In this section we demonstrate the power of the computational method
developed in Section~\ref{algo} with some further pretzel knot computations.

As a warm-up,  we show that the mutant $P(3,5,-3,-5,7)$ of the slice pretzel knot $P(3,-3,5,-5,7)$ is not slice. The homology of the 3-fold branched cover $B_3$ of $P(3,5,-3,-5,7)$ is 
$(\zz_7)^2\oplus (\zz_{19})^2$.  Taking $\zz_7$ coefficients we have 
$$H_1(B_3;\zz_7)=R_{x-2}\oplus R_{x-4}.$$
The reduced twisted polynomial associated to the character that vanishes on $R_{x-4}$ equals $223t^2-44t+223$ and the reduced twisted polynomial associated to the character that vanishes on $R_{x-2}$ equals  
$1063t^2-3166t+1063$. These are irreducible, and, in particular, not norms.  Hence $P(3,5,-3,-5,7)$ is not slice.

As  a more substantial example, consider the pretzel knot $P(3,7,9,11,15)$.  Permuting the parameter values results in $5! = 120 $ knots, all mutants of each other. However, cyclically permuting the parameter values does not change the isotopy class of the knot, and thus we consider only those permutations that fix the first parameter value at 3.  This reduces us to 24 mutants.   
 The knot $P(3,a,b,c,d)$ can be seen to be the reverse of $P(3,d,c,b,a)$, so this reduces us to 12 mutants and their reverses.  
 In Table~\ref{twistedpolys1}  the twelve we focus on are listed.

  \begin{theorem} The 24   pretzel knot mutants of $P(3,7,9,11,15)$ represent distinct classes in the concordance group.
 
 \end{theorem}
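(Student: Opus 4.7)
The plan is to establish pairwise non-concordance by showing, for every pair of distinct mutants $K_i, K_j$ from the list of 24, that the connected sum $K_i \# (-K_j)$ (with $-K_j$ denoting the concordance inverse) fails the obstruction in Corollary~\ref{slicethmcor}; since concordance classes form an abelian group, this implies the 24 classes are distinct. The choice of the four parameters $7, 9, 11, 15$ in $P(3,a,b,c,d)$ ensures that the algebraic concordance class and all classical invariants agree across the 24 mutants, so the refinement provided by the twisted polynomial machinery developed in this paper is exactly what is needed.

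For each $K = P(3,a,b,c,d)$ in the family, I would first choose a prime $p$ (most likely $p=2$ or $p=3$) and compute $H_1(B_p(K);\zz_q)$ with its $\ff_q[\zz_p]$-module structure and linking form $\lk$, using a Goeritz or Seifert presentation of a pretzel spanning surface. The decomposition of Proposition~\ref{prop4.4} then exhibits the irreducible summands and determines the relevant character spaces. Using the algorithm of Section~\ref{algo} applied to the standard pretzel presentation of $\pi_1(S^3-K)$, I would tabulate the reduced twisted polynomials $\tD_{K,\chi}(t) \in \qq[\zeta_q]\var$ for each equivariant character $\chi$. For the connected sum $K_i \# (-K_j)$, the module $H_1(B_p;\zz_q)$ is the orthogonal sum of the two summands, and by multiplicativity under connected sum combined with the fact that concordance inversion conjugates the twisted polynomial, the polynomial associated to a character pair $(\chi_i,\chi_j)$ equals $\tD_{K_i,\chi_i}(t)\cdot\overline{\tD_{K_j,\chi_j}(t)}$ up to units in $\qq[\zeta_q]\var$.

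The obstruction of Corollary~\ref{slicethmcor} then amounts to exhibiting, for every equivariant metabolizer $A$ of the orthogonal-sum linking form, at least one character pair vanishing on $A$ for which this product is not a norm in $\qq[\zeta_q]\var$. The non-normality would be checked via Lemma~\ref{gausslem}, reducing modulo appropriate primes in $\zz[\zeta_q]$, or via the complex-root pairing observation that follows it. The main obstacle is the combinatorial scale: there are $\binom{24}{2} = 276$ unordered pairs, and for each one must enumerate the equivariant metabolizers of a two-summand orthogonal form. Three simplifications make the task tractable: the cyclic and reverse symmetries of the pretzel family reduce the number of independent polynomial computations; once the table of $\tD_{K,\chi}$ is in hand, distinguishing any specific pair reduces to a mechanical algebraic check; and for well-chosen $(p,q)$ the irreducible summands of $H_1(B_p;\zz_q)$ are one-dimensional, so equivariant metabolizers are parameterized by small combinatorial data, in close analogy with the 12-crossing-knot computations of Section~\ref{obstructions}.
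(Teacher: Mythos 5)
Your proposal is correct and follows essentially the same route as the paper: form $K_i\,\#\,(-K_j)$, apply Corollary~\ref{slicethmcor}, exploit that $H_1(B_3;\zz_q)$ splits into one-dimensional $\ff_q[\zz_3]$-summands so the images of invariant metabolizers are easily enumerated, compute the reduced twisted polynomials by the Section~\ref{algo} algorithm on the knot group itself, and rule out norms via Lemma~\ref{gausslem}. The only execution points your outline glosses over, which the paper must confront, are that a single coefficient prime does not suffice (at $q=7$ several mutants share identical polynomial tables, so the ``diagonal'' metabolizer case yields norms and one must pass to $q=13$), and that at $q=13$ the polynomials have coefficients outside $\qq$, so rescaling the character acts by Galois automorphisms and the non-norm check must be carried out up to Galois conjugacy.
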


As will be seen, the polynomials become fairly large in studying these knots, so we will only outline the approach and give some specific examples.

 We use the 
  3--fold cover $X_3$ and the corresponding branched cover $B_3$. The untwisted Alexander polynomial of $X_3$  in the cover is 
\noindent  $$ \Delta_{X_3,   \qq\var}(t)  = 3375000000t^4-9893670443t^3 \qquad   $$ 
  $$\qquad \qquad +13204318970 t^2  -9893670443t+3375000000.$$ 
This can be computed from the Alexander polynomial $1500-5807\,t+8615\,{t}^{2}-5807\,{t}^{3}+1500\,{t}^{4} $ of  $P(3,7,9,11,15)$ by using Corollary~\ref{corpoly}. Theorem \ref{bigrep} implies that this polynomial equals $\Delta_{X,\tilde{\rho}_0}(t)$ (and so also $\tD_{X,\tilde{\rho}_0}(t)$), where $\tilde{\rho}_0$ corresponds to the zero character $\chi\co H_1(B_3)\to \zz_q$.

The first homology satisfies  $H_1(B_3) \cong T \oplus T$,
 where $T = \zz_2 \oplus \zz_{7} \oplus \zz_{13} \oplus \zz_{71}$. 
 The 24 mutants are distinguished in the concordance group by the twisted polynomials associated to the choices $q=7$ and $q=13$.

We begin by focusing on the the 7--torsion, 
$H_1(B_3)_{(7)} \cong  H_1(B_p;\zz_7)$, 
so we set $q = 7$.  In this case the homology splits into the direct sum 
$$H_1(B_3;\zz_7)\cong R_{x-2}\oplus R_{x-4}.$$
This is precisely the case that occurred in analyzing a single 4--stranded pretzel knot in~\cite{kl2} and the analysis is much the same.  The main distinction is that because of the added complexity here, computing the twisted Alexander polynomials via a presentation of the fundamental group of the 3--fold cover would be daunting.  The computation is made accessible using Theorem \ref{bigrep}.

Table~\ref{twistedpolys1}  lists the twisted polynomials associated to the nontrivial representations  that factor through either $R_{x-2}$ or $R_{x-4}$.  Fix one of each and denote them $\rho_2$ and $\rho_4$.  Note, reversing the orientation of a knot interchanges $R_{x-2}$ and $R_{x-4}$. Since the  $\tD_{X,\rho_i}$ are all integer (rather than $\qq[\zeta_q]$) polynomials, the Galois automorphism $\zeta_q\mapsto\zeta_q^a$ leaves $\tD_{X,\rho_i}$ fixed if $a\ne 0$. Hence $\tD_{X,\rho_i}$ is independent of the choice of nonzero character $\chi:R_{x-i}\to \zz_7$.

\begin{table}[h]\label{twistedpolys1}  
$$\begin{array}{ccc}
  
\text{Knot} & \tD_{X, \tilde{\rho}_2} & \tD_{X, \tilde{\rho}_4} \\  \hline
P(3,7,9,11,15)  & -8000  t^2+12519  t-8000& 5713  t^2-8194  t+5713     \\    
 P(3,15,7,9,11)  &-438976+826423  t-438976  t^2&      t^2+24  t+1  \\     
P(3,7,15,9, 11)&   -438976+826423  t-438976  t^2 &    t^2+24  t+1     \\     
P(3,7,9,15,11)&    -438976+826423  t-438976  t^2  &     t^2+24  t+1   \\      
 
 P(3,9,11,15,7)&  -125  t^2-88  t-125   & -59443  t^2+102315  t-59443       \\      
  
 P(3,9,11,7,15)&  -125  t^2-88  t-125   & -59443  t^2+102315  t-59443       \\      

 P(3,15,9,11,7)& -314432  t^2+547256  t-314432    &     64  t^2-305  t+64   \\

 P(3,9,15,11,7)&-314432  t^2+547256  t-314432      &  64  t^2-305  t+64     \\      

 P(3,15,11,7,9)&  5713  t^2-8194  t+5713   &   -8000  t^2+12519  t-8000     \\      

 P(3,11,15,7,9)&     t^2+24  t+1 &  -438976+826423  t-438976  t^2      \\      

 P(3,11,7,15,9)&   t^2+24  t+1  &       -438976+826423  t-438976  t^2  \\      

 P(3,11,7,9,15)&     t^2+24  t+1  &  -438976+826423  t-438976  t^2    \\      \hline

\end{array}
$$
\caption{}
\end{table}

Let $P_1$ and $P_2$ be two of the knots listed or their reverses.  If they were concordant, then $P_1 \# -P_2$ would be slice.  Thus, there would be a 2--dimensional invariant metabolizer in the $\zz_7$  homology of the 3--fold cover so that all associated Casson-Gordon invariants would vanish, and in particular the corresponding reduced twisted polynomials would be norms.  Let $X_i$ denote the complement of $P_i$.

Since the homology of a branched cover of a connected sum of knots is naturally the direct sum of the homology of the summands, the homology of the 3--fold branched cover of $P_1 \# -P_2$ with $\zz_7$--coefficients is isomorphic to 
\begin{equation}\label{decompofsum}
R_{x-2}\oplus R_{x-4}\oplus R_{x-2}\oplus R_{x-4}.\end{equation}
Proceeding as in~\cite{kl2} we find that if $P_1\# -P_2$ were slice, certain  products would be be norms in $\qq[\zeta_3]\var$. The possibilities are:

\begin{itemize}
\item If the image $\bar{A}$ of the invariant metabolizer in the homology of the 3--fold branched cover equals   $0\oplus R_{x-4}\oplus 0\oplus R_{x-4}$ in the decomposition (\ref{decompofsum}),  take a character $\chi$ which is nontrivial on the first summand and trivial on the other three, and hence vanishes on $\bar{A}$.  The resulting reduced twisted polynomial equals the product  $ \tD_{X_1, \tilde{\rho}_2}(t)  \tD_{X_2, \tilde{\rho}_0}(t)$.

\item If the image of the invariant metabolizer in the homology of the 3--fold branched cover equals   $R_{x-2}\oplus 0\oplus R_{x-2}\oplus 0$,  take a character $\chi$ which is nontrivial on the second summand and trivial on the three.  The resulting reduced twisted polynomial equals the product  $ \tD_{X_1, \tilde{\rho}_4}(t)  \tD_{X_2, \tilde{\rho}_0}(t)$.

\item Since $\bar{A}$ is invariant, the only other possibility is that $\bar{A}$ is spanned by a pair of vectors of the form $(a,0,b,0)$ and $(0,c,0,d)$.   If $a$ and $b$ are both nonzero, define $\chi$ to be the dot product with $(-b,0,a,0)$, This vanishes on $A$ and  the resulting  reduced twisted polynomial equals the product 
$ \tD_{X_1, \tilde{\rho}_2}(t)  \tD_{X_2, \tilde{\rho}_2}(t)$.  Similarly if both $c$ and $d$ are nonzero 
one finds a character vanishing on $\bar{A}$ with reduced twisted polynomial  $ \tD_{X_1, \tilde{\rho}_4}(t)  \tD_{X_2, \tilde{\rho}_4}(t)$. If one of $a,b,c$ or $d$ is zero one can choose $\chi$ as in the first two cases.

\end{itemize}

The first two cases do not produce norms for any of the knots.  The third case clearly does, as one can see from the table. For example, the calculations do  not rule out the possibility that $P(3,15,7,9,11)$ 
is concordant to the reverse of $P(3,11,15,7,9)$.  The calculations with $q=7$ therefore do not rule out 
the possibility that some pairs of the 12 knots or their reverses   might be concordant.  

To eliminate the possibility of concordance of these remaining pairs, we   calculate with $q=13$. It turns out that the pairs not distinguished by the $q=7$ twisted polynomials are distinguished by the $q=13$ polynomials.

  In this case
$$H_1(B_3;\zz_{13})\cong R_{x-3}\oplus R_{x-9}.$$  The analysis is similar to that in the previous examples, quickly reducing to the third case.  However, now the polynomials have coefficients that are in $\qq[\zeta_{13}]$, but not in $\zz$ or $\qq$.   One must therefore consider the polynomials as well as those obtained by taking  Galois conjugates of the coefficients, since the twisted polynomial of a multiple $a\chi$ of $\chi$  is obtained from the twisted polynomial for $\chi$ by applying the Galois automorphism $\zeta_{13}\mapsto \zeta_{13}^a$.  These polynomials are quite long   and we only indicate one example.

 Consider $P_1 = P(3,15,7,9,11)$ and $P_2 = P(3,7,15,9,11)$.  The $q=7$ calculations do not rule out the possibility that these are concordant.  The polynomials $ \tD_{X, \tilde{\rho}_i}(t)  $ with $q = 13$ and $i = 3$ or $9$   for both of these knots are  irreducible, quadratic, symmetric and can be made monic.  Then  each is determined by its linear coefficient.  That is,  $  \tD_{X, \tilde{\rho}_i}(t)  = t^2 + c t+1$.  We give  the values of only  $c$, with $\zeta_{13}$ abbreviated $\zeta$.\vskip.1in
 
$ \tD_{X_1, \tilde{\rho}_3}(t), c =
\frac{5023889}{3319616}\zeta^{12}+\frac{4735277}{3319616}\zeta^{11}+\frac{5023889}{3319616}\zeta^{10}+\frac{5023889}{3319616}\zeta^9+\frac{4735277}{3319616}\zeta^8+\frac{4735277}{3319616}\zeta^7+\frac{4735277}{3319616}\zeta^6+\frac{4735277}{3319616}\zeta^5+\frac{5023889}{3319616}\zeta^4+\frac{5023889}{3319616}\zeta^3+\frac{4735277}{3319616}\zeta^2+\frac{5023889}{3319616}\zeta
 $.
\vskip.1in

$  \tD_{X_1, \tilde{\rho}_9}(t)  , c =
\frac{1130}{79}\zeta^{12}+\frac{626}{79}\zeta^{11}+\frac{1130}{79}\zeta^{10}+\frac{1130}{79}\zeta^9+\frac{626}{79}\zeta^8+\frac{626}{79}\zeta^7+\frac{626}{79}\zeta^6+\frac{626}{79}\zeta^5+\frac{1130}{79}\zeta^4+\frac{1130}{79}\zeta^3+\frac{626}{79}\zeta^2+\frac{1130}{79}\zeta$.

\vskip.1in

$ \tD_{X_2, \tilde{\rho}_3}(t) , c =-\frac{511538}{55171}\zeta^{12}-\frac{271466}{55171}\zeta^{11}-\frac{511538}{55171}\zeta^{10}-\frac{511538}{55171}\zeta^9-\frac{271466}{55171}\zeta^8-\frac{271466}{55171}\zeta^7-\frac{271466}{55171}\zeta^6-\frac{271466}{55171}\zeta^5-\frac{511538}{55171}\zeta^4-\frac{511538}{55171}\zeta^3-\frac{271466}{55171}\zeta^2-\frac{511538}{55171}\zeta$.

\vskip.1in

$ \tD_{X_2, \tilde{\rho}_9}(t)  , c =-\frac{97030}{1327}\zeta^{12}-\frac{172810}{1327}\zeta^{11}-\frac{97030}{1327}\zeta^{10}-\frac{97030}{1327}\zeta^9-\frac{172810}{1327}\zeta^8-\frac{172810}{1327}\zeta^7-\frac{172810}{1327}\zeta^6-\frac{172810}{1327}\zeta^5-\frac{97030}{1327}\zeta^4-\frac{97030}{1327}\zeta^3-\frac{172810}{1327}\zeta^2-\frac{97030}{1327}\zeta$.

\vskip.1in
Notice that we have written these numbers in terms of the powers $\zeta^1, \cdots , \zeta^{12}$.  Thus, the action of the Galois group permutes the coefficients, and it is easy to see that none of these are conjugate to each other. Arguing as in the third case above one concludes that $P_1$ and $P_2$ are not concordant.

The complete analysis proceeds in the same manner.

\newcommand{\etalchar}[1]{$^{#1}$}
 
\end{document}